\newcommand{\kron}{\ensuremath{\boxtimes}} 
\newcommand{\T}{{\sf T}}        
\DeclareMathOperator{\rank}{rank}
\DeclareMathOperator{\trace}{trace}
\DeclareMathOperator{\GL}{GL}
\DeclareMathOperator{\sspan}{span}
\def\rddots{\mathinner{\mkern1mu\raise\p@
\vbox{\kern7\p@\hbox{.}}\mkern2mu
\raise4\p@\hbox{.}\mkern2mu\raise7\p@\hbox{.}\mkern1mu}}
\newcommand{\rightarrowTriangle}{\relbar\joinrel\mathrel\RHD}
\DeclareFontFamily{U}{BOONDOX-calo}{\skewchar\font=45 }
\DeclareFontShape{U}{BOONDOX-calo}{m}{n}{
  <-> s*[1.05] BOONDOX-r-calo}{}
\DeclareFontShape{U}{BOONDOX-calo}{b}{n}{
  <-> s*[1.05] BOONDOX-b-calo}{}
\DeclareMathAlphabet{\mathscrc}{U}{BOONDOX-calo}{m}{n}
\theoremstyle{definition}
\newtheorem{defn}{Definition}
\newtheorem{lem}[defn]{Lemma}
\newtheorem{prop}[defn]{Proposition}
\newtheorem{theo}[defn]{Theorem}
\newtheorem{corol}[defn]{Corollary}
\theoremstyle{remark}
\newtheorem{rem}[defn]{Remark}
\newtheorem{exmp}[defn]{Example}
\tikzset{%
  highlight/.style={rectangle,rounded corners,fill=red!15,draw,
    fill opacity=0.5,thick,inner sep=0pt}
}
\def\addlegendimage{\pgfplots@addlegendimage}
\newcommand\blfootnote[1]{%
  \begingroup
  \renewcommand\thefootnote{}\footnote{#1}%
  \addtocounter{footnote}{-1}%
  \endgroup
}
\title{On the minimal ranks of matrix pencils and the existence of a best approximate block-term tensor decomposition}
\author{J.~H.~de M.~Goulart$^{\dagger*}$ \, and \, P.~Comon$^\dagger$ \\[5pt]
        {\small $\dagger$ Univ.~Grenoble Alpes, CNRS, Gipsa-Lab, F-38000 Grenoble, France } \\ 
        {\small $*$ I3S Laboratory, CNRS, Univ.~C\^{o}te D'Azur, 06900 Sophia-Antipolis, France} \\
        {\small \{henrique.goulart, pierre.comon\}@gipsa-lab.fr}
}
\begin{document}

\maketitle

\blfootnote{$*$ Corresponding author.}
\blfootnote{This work was supported by the European Research Council under the European Programme FP7/2007-2013, Grant AdG-2013-320594 
``DECODA.''}
\vspace{-1cm}

\abstract{
Under the action of the general linear group with tensor structure, the ranks of matrices $A$ and $B$ forming an $m \times n$ 
pencil $A + \lambda B$ can 
change, but in a restricted manner. Specifically, with every pencil one can associate a pair of minimal ranks, which is unique 
up to a 
permutation. This notion can be defined for matrix pencils and, more generally, also for matrix polynomials of arbitrary degree. In 
this paper, we provide a formal definition of the minimal ranks, discuss its properties and the natural hierarchy it 
induces in a pencil space.
Then, we show how the minimal ranks of a pencil can be determined from its Kronecker canonical form.
For illustration, we classify the orbits according to their minimal ranks (under the action of the general linear group) in 
the case of real pencils with $m, n \le 4$. 
Subsequently, we show that real regular $2k \times 2k$ pencils having only complex-valued eigenvalues, which form an open 
positive-volume set, do not admit a best approximation (in the norm topology) on the set of real pencils whose minimal ranks are 
bounded by $2k-1$.
Our results can be interpreted from a tensor viewpoint, where the minimal ranks of a degree-$(d-1)$ matrix polynomial characterize the 
minimal ranks of matrices constituting a block-term decomposition of an $m \times n \times d$ tensor into a sum of matrix-vector tensor 
products.

\vskip4mm

\noindent \textbf{Keywords:} matrix pencil, Kronecker canonical form, matrix polynomial, tensor decomposition
\vskip2mm
\noindent \textbf{AMS subject classification:} 15A22, 15A69, 41A50
}

\section{Introduction}
\label{sec:intro} 


The Kronecker-Weierstrass theory of $m \times n$ matrix pencils provides a complete classification in terms of 
$\GL_{m,n}(\mathbb{F})$-orbits, which are equivalence classes under the action of $\GL_{m}(\mathbb{F}) \times \GL_{n}(\mathbb{F})$:
\begin{equation*}
 \left( P, \, U \right) \cdot \left( \mu A + \lambda B \right) =   \mu P A U^\T + \lambda P B U^\T,
\end{equation*}
where the pencil $(A,B)$ is expressed in homogeneous coordinates. 
Here, $\mathbb{F}$ denotes a field, usually $\mathbb{R}$ or $\mathbb{C}$.
These orbits are represented by Kronecker canonical forms, which are characterized by unique minimal indices describing the singular 
part of the pencil along with elementary divisors associated with its regular part \cite{Gantmacher1960}. 
It follows then that these elementary divisors are $\GL_{m}(\mathbb{F}) \times \GL_{n}(\mathbb{F})$-invariant.


This theory has been extended by Ja'Ja' and Atkinson \cite{JaJa1979b, Atkinson1991}, who have characterized the orbits of the larger 
group of tensor equivalence transformations $\GL_{m}(\mathbb{F}) \times \GL_{n}(\mathbb{F}) \times \GL_2 (\mathbb{F})$, acting on 
pencils via
\begin{equation*}
 \left( P, \, U, \, T \right) \cdot \left( \mu A + \lambda B \right) 
  =  \mu P (t_{11} A + t_{12} B) U^\T + \lambda P (t_{21} A + t_{22} B) U^\T,
\end{equation*} 
where 
\begin{equation*}
 T = \begin{pmatrix}
      t_{11} & t_{12} \\ t_{21} & t_{22}
     \end{pmatrix}
     \in \GL_2 (\mathbb{F}).
\end{equation*} 
For simplicity of notation, we use the shorthands $\GL_{m,n,2}(\mathbb{F}) \triangleq \GL_{m}(\mathbb{F}) \times 
\GL_{n}(\mathbb{F}) \times \GL_2 (\mathbb{F})$ and $\GL_{m,n}(\mathbb{F}) \triangleq \GL_{m}(\mathbb{F}) \times \GL_{n}(\mathbb{F})$. 
Ja'Ja' \cite{JaJa1979b} has shown that the Kronecker minimal indices of a pencil are $\GL_{m,n,2}(\mathbb{F})$-invariant, and so the 
singular part of a pencil is preserved by $\GL_{m,n,2}(\mathbb{F})$ as well. 
However, the elementary divisors of its regular part are not. Nevertheless, their powers still remain the same, which motivates the 
terminology ``invariant powers,'' used by Ja'Ja' \cite{JaJa1979b}. Atkinson \cite{Atkinson1991} went on to prove that, for an 
algebraically closed field $\mathbb{F}$, the equivalence classes of regular pencils are characterized by those powers and also by 
certain ratios which completely describe the elementary divisors. Specifically, recalling that over such a field all elementary 
divisors are powers of linear factors of the form
\begin{equation*}
  \phi_i(\mu, \lambda) = \alpha_i \mu + \beta_i \lambda  \quad \text{for some } \alpha_i, \beta_i \in \mathbb{F},
\end{equation*}
these ratios are defined as $\gamma_i \triangleq \alpha_i / \beta_i \in \mathbb{F} \cup \{\infty\}$.


When viewed as a tensor, the (tensor) rank\footnote{This is not to be confused with the \emph{normal rank} of the pencil 
$\mu A + \lambda B$, simply defined as $\rank(\mu A + \lambda B)$ \cite{Gantmacher1960, Ikramov1993}.} of a pencil $\mu A + \lambda B$ 
is defined as the minimal number $r$ of rank-one matrices $U_1, \ldots, U_r \in \mathbb{F}^{m \times n}$ such that $A, B \in \sspan 
\{U_1, \ldots, U_r\}$ \cite{JaJa1979}. Equivalently, it is given by the 
minimal number $r$ such that one can find vectors $u_i \in \mathbb{F}^m$, $v_i \in \mathbb{F}^n$ and $w_i \in \mathbb{F}^2$ satisfying
\begin{equation*}
 A \otimes e_1 + B \otimes e_2 = \sum_{i=1}^r u_i \otimes v_i \otimes w_i,
\end{equation*} 
where $e_i$ denotes the canonical basis vector of $\mathbb{F}^2$. Under the action of $(P,U,T) \in \GL_{m,n,2}(\mathbb{F})$, this 
expression is transformed into the tensor
\begin{equation*}
 \left( P, U, T \right) \cdot \left( A \otimes e_1 + B \otimes e_2 \right) 
   = \sum_{i=1}^r  \left(  P \, u_i \right) \otimes \left( U \, v_i \right) \otimes \left( T \, w_i \right),
\end{equation*} 
from which it is visible that the tensor rank is $\GL_{m,n,2}(\mathbb{F})$-invariant. The rank of $m \times n \times 2$ tensors 
can thus be studied by considering $\GL_{m,n,2}(\mathbb{F})$-orbits and associated representatives (see, e.g., the classification of 
$\GL_{2,2,2}(\mathbb{R})$-orbits undertaken by De Silva and Lim \cite{deSilva2008}).


One application of the study of $\GL_{m,n,2}(\mathbb{F})$-orbits is in algebraic complexity theory, since the tensor rank 
of $\mu A + \lambda B$ quantifies the minimal number of multiplications needed to simultaneously evaluate a pair of 
bilinear forms $g_1(x,y) = \langle x, A y \rangle$ and $g_2(x,y) = \langle x, B y \rangle$ \cite{JaJa1979}. In the case where $\mathbb{F}$ is 
algebraically closed, Ja'Ja' \cite{JaJa1979} has derived results which allow determining the tensor rank of any pencil based on its 
Kronecker canonical form. Sumi et al.~\cite{Sumi2009} have extended these results to pencils over any field $\mathbb{F}$.

Tensor equivalence transformations  can also be employed to avoid so-called infinite elementary divisors of regular 
pencils \cite{Ikramov1993}. These arise when matrix $B$ is singular (note that both $A$ and $B$ can be singular but still 
satisfy $\det(\mu A + \lambda B) \not\equiv 0$). In non-homogeneous coordinates, the polynomial $\det (A + \lambda B)$ of an $n 
\times n$ pencil $A + \lambda B$ has degree $s = \rank B$, and its characteristic polynomial is said to have infinite 
elementary divisors of combined degree $n - s$. In this case,
the tensor equivalence transformation $A + \lambda B \mapsto B + \lambda (A + \alpha B)$ can be employed for any $\alpha \in 
\mathbb{F}\setminus\{0\}$ such that $\rank (A + \alpha B) = n$, yielding a pencil having only finite elementary divisors, 
including some of the form $\lambda^q$ induced by the infinite elementary divisors of $A + \lambda B$.
The existence of such an $\alpha$ is guaranteed by definition, since $A + \lambda B$ is regular. In other words, every regular pencil 
$\mu A + \lambda B$ is $\GL_{m,n,2}(\mathbb{F})$-equivalent to another pencil $\mu A' + \lambda B'$ such that $B'$ is 
nonsingular. In fact, it is always $\GL_{m,n,2}(\mathbb{F})$-equivalent to some $\mu A' + \lambda B'$ with nonsingular 
matrices $A'$ and $B'$.

On the other hand, not every regular pencil $\mu A + \lambda B$ constituted by nonsingular matrices $A$ and $B$ is 
$\GL_{n,n,2}(\mathbb{F})$-equivalent to some other pencil $\mu A' + \lambda B'$ such that either $A'$ or $B'$ are 
singular (or both). Take, for instance, $\mathbb{F} = \mathbb{R}$ and
\begin{equation*}
 A = \begin{pmatrix} 1 & 0 \\ 0 & 1 \end{pmatrix}, \quad B = \begin{pmatrix} 0 & -1 \\ 1 & 0 \end{pmatrix}.
\end{equation*} 
No tensor equivalence transformation in $\GL_{m,n,2}(\mathbb{R})$ of this pencil can yield $\mu A' + \lambda B'$ such that 
either $A'$ or $B'$ is singular. Obviously, this property is $\GL_{m,n,2}(\mathbb{R})$-invariant.
It turns out that each $\GL_{m,n,2}(\mathbb{F})$-orbit $\mathcal{O}$ of a matrix pencil space can be classified on the basis 
of its associated \emph{minimal ranks} $r$ and $s$, with $r \ge s$, which are such that every pencil $\mu A + \lambda B \in 
\mathcal{O}$ with $\rank A \ge \rank B$ satisfies $\rank A \ge r$ and $\rank B \ge s$.
This notion of intrinsic complexity of a matrix pencil is complementary to its tensor rank, in the sense that pencils of same 
tensor rank do not necessarily have the same minimal ranks and vice-versa. For simplicity, we will compactly denote the minimal ranks 
of a pencil by $ \rho(A,B) = (r,s)$.

It turns out that there is a direct connection between the minimal ranks of a pencil and the decomposition of its associated 
third-order tensor in block terms  consisting of matrix-vector outer products, as introduced by De Lathauwer 
\cite{deLathauwer2008b}. Namely, the components of $\rho(A,B)$ are the minimal numbers $r$ and $s$ satisfying
\begin{equation*}
 A \otimes e_1 + B \otimes e_2 = \left( \sum_{i=1}^{r} u_i \otimes v_i \right) \otimes w +
                                 \left( \sum_{i=1}^{s} x_i \otimes y_i \right) \otimes z,
\end{equation*} 
where $r \ge s$ and $\{w, z\}$ forms a basis for $\mathbb{F}^2$. The theoretical properties of such block-term decompositions 
(henceforth abbreviated as BTD) of $m \times n \times 2$ tensors are therefore related to properties of matrix pencils via this notion 
of minimal ranks. 

In this paper, we will more generally define the minimal ranks of $m \times n$ matrix polynomials $\sum_{k=1}^{d} \lambda^{k-1} 
A_k$, which include matrix pencils as a special case. This property of matrix polynomials is directly related to the BTD of $m 
\times n \times d$ tensors. In particular, similarly to the tensor rank, it induces a hierarchy of matrix polynomials, albeit a more 
involved one. 
We derive results which determine the minimal ranks of any matrix pencil in Kronecker canonical form.
A classification of $\GL_{m,n,2}(\mathbb{R})$-orbits of real $m \times n$ pencils in terms of their minimal ranks is then
carried out for $m, n \le 4$.
On the basis of these results, we proceed to show that:
\begin{enumerate}
 \item The set of real $2k \times 2k$ pencils which are $\GL_{2k,2k,2}(\mathbb{R})$-equivalent to some $\mu A + \lambda B$
with $\rank A \le 2k-1$ and $\rank B \le 2k-1$ is not closed in the norm topology for any positive integer $k$.
 \item No real $2k \times 2k$ pencil having minimal ranks $(2k,2k)$ admits a best approximation in the set above described. 
\end{enumerate}
The first above result is analogous to the fact that a set of tensors having rank bounded by some number $r>1$ is generally not 
closed. 
Similarly, the second one parallels the fact that no element of certain sets of real rank-$r$ tensors admits a best approximation of 
a certain rank\footnote{For instance, no real $2 \times 2 \times 2$ tensor of rank $3$ admits a best rank-$2$ approximation.} $r' < r$ 
in the norm topology \cite{Stegeman2006, deSilva2008}. 
This second result is of consequence to applications relying on the BTD, since the set of real pencils having minimal ranks $(2k,2k)$ 
is open in the norm topology, thus having positive volume. For complex-valued pencils, the results of \cite{Qi2017} imply that such a 
non-existence phenomenon can only happen over sets of zero volume. We shall give a template of examples of (possibly complex) pencils 
having no best approximation on a given set of pencils with strictly lower minimal ranks.

It should be noted that the fact that a tensor might not admit an approximate BTD with a certain prescribed structure (referring 
to the number of blocks and their multilinear ranks \cite{deLathauwer2008b}) is already known. Specifically, De Lathauwer 
\cite{deLathauwer2008c} has provided an example relying on a construction similar to that of De Silva and Lim \cite{deSilva2008} 
concerning the case of low-rank tensor approximation. Our example given in \Cref{sec:ill-posed-examples} is in the same spirit. 
Nonetheless, to our knowledge ours is the first work showing the existence of a positive-volume set of tensors having no approximate 
BTD of a given structure, a phenomenon which is known to happen for low-rank tensor approximation \cite{Stegeman2006, deSilva2008}.

 
\section{Minimal ranks of pencils}
\label{sec:pencils}

For brevity, we will henceforth express matrix pencils only in non-homogeneous coordinates.

\subsection{Definition and basic results}
\label{sec:min-ranks}

Given its prominent role in what follows, the $\GL_{m,n,2}(\mathbb{F})$-orbit of a pencil deserves a special notation:
\begin{equation*}
 \mathcal{O}(A,B) \triangleq 
                   \{(P,U,T) \cdot (A + \lambda B) \ | \ (P,U,T) \in \GL_{m,n,2}(\mathbb{F}) \}.
\end{equation*}
It will also be helpful to introduce the sets
\begin{equation}
\label{Bset}
 \mathcal{B}_{r,s} \triangleq 
                   \left\{ A + \lambda B \; | \;
                   \exists \, A' + \lambda B' \in \mathcal{O}(A,B) 
                   \mbox{ such that }
                   \rank A' \le r, \, \rank B' \le s \right\}.
\end{equation} 
Clearly, $\mathcal{B}_{r,s} = \mathcal{B}_{s,r}$, and thus we shall assume that $r \ge s$ without loss of generality.
For simplicity, when $r \ge s = 0$ we can also write $\mathcal{B}_r$ instead of $\mathcal{B}_{r,0}$. According to this convention and  
to definition \eqref{Bset}, we have, for instance, $\mathcal{B}_{r} \subseteq \mathcal{B}_{r,s} \subseteq \mathcal{B}_{r+1,s}$. 
Furthermore, $\mathcal{B}_{r,s}$ is by definition $\GL_{m,n,2}(\mathbb{F})$-invariant, because the relation $(A,B) \sim 
(A',B')$ defined as $A' + \lambda B' \in \mathcal{O}(A,B)$ is reflexive and transitive, i.e., it defines an equivalence class. Hence:

\begin{lem}
 $A + \lambda B \in \mathcal{B}_{r,s}$ if and only if $\mathcal{O}(A,B) \subseteq \mathcal{B}_{r,s}$.
\end{lem}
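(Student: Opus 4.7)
The plan is to observe that the lemma is essentially the statement that membership in $\mathcal{B}_{r,s}$ is an orbit invariant, which follows almost immediately from the fact that the relation ``belongs to the same orbit'' is an equivalence relation on the pencil space. The author has already highlighted this right before the lemma, so the proof should just unpack it carefully.

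For the backward direction, I would simply note that $A + \lambda B$ lies in its own orbit $\mathcal{O}(A,B)$, obtained by taking the identity action $(P,U,T) = (I_m, I_n, I_2)$; hence if the whole orbit is contained in $\mathcal{B}_{r,s}$ then in particular so is $A + \lambda B$.

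For the forward direction, suppose $A + \lambda B \in \mathcal{B}_{r,s}$, so that by definition there exists $A' + \lambda B' \in \mathcal{O}(A,B)$ with $\rank A' \le r$ and $\rank B' \le s$. Pick any other representative $A'' + \lambda B'' \in \mathcal{O}(A,B)$; the goal is to show $A'' + \lambda B'' \in \mathcal{B}_{r,s}$, i.e., that the same low-rank representative $A' + \lambda B'$ lies in $\mathcal{O}(A'',B'')$. Since $\GL_{m,n,2}(\mathbb{F})$ is a group, the relation of lying in the same orbit is symmetric and transitive, so $\mathcal{O}(A,B) = \mathcal{O}(A'',B'')$, whence $A' + \lambda B' \in \mathcal{O}(A'',B'')$ as required. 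Concretely, if $A'' + \lambda B'' = (P_1,U_1,T_1)\cdot(A+\lambda B)$ and $A' + \lambda B' = (P_2, U_2, T_2)\cdot(A + \lambda B)$, then $A' + \lambda B' = (P_2 P_1^{-1}, U_2 U_1^{-1}, T_2 T_1^{-1})\cdot(A'' + \lambda B'')$, which exhibits the required element of $\GL_{m,n,2}(\mathbb{F})$ explicitly.

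There is no real obstacle here; the only thing to be careful about is not to confuse the two directions, and to make explicit use of the group structure (existence of inverses) to turn the defining ``there exists'' condition for $\mathcal{B}_{r,s}$ into a genuine orbit-invariant statement. Once this is spelled out, the lemma follows, and it justifies subsequently speaking of ``$\mathcal{B}_{r,s}$ as a union of orbits.''
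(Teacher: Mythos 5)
Your proof is correct and takes essentially the same approach as the paper, which simply observes that $\mathcal{B}_{r,s}$ is by definition $\GL_{m,n,2}(\mathbb{F})$-invariant because ``belonging to the same orbit'' is an equivalence relation. You have merely unpacked the reflexivity (for the backward direction) and the symmetry/transitivity afforded by the group structure (for the forward direction), which is what the paper's one-line justification implicitly relies on.
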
	

As far as the question of whether $A + \lambda B$ is in $\mathcal{B}_{r,s}$ for some $(r,s)$ is concerned, all that matters is the 
action of $\GL_2(\mathbb{F})$. Indeed, if $A' + \lambda B' = (P,U,T) \cdot (A + \lambda B)$ are such that $\rank A' = r$ and $\rank B' 
= s$, then\footnote{Note that we denote the identity of $\GL_m(\mathbb{F})$ by $E_m$ or, when no ambiguity arises, simply by $E$.} $A'' 
+ \lambda B'' = (P^{-1},U^{-1},E) \cdot (A' + \lambda B') = (E,E,T) \cdot (A + \lambda B)$ satisfies $\rank A'' = \rank P^{-1} A' 
 U^{-\T} = \rank A'$ and $\rank B'' = \rank P^{-1} B' U^{-\T} = \rank B'$. We have shown the following.

\begin{lem}
\label{lem:GL2}
$A + \lambda B \in \mathcal{B}_{r,s}$ if and only if there exists $T \in \GL_2(\mathbb{F})$ such that $A' + \lambda B' = (E, E, T) 
\cdot (A + \lambda B)$ satisfies $\rank A' \le r$ and $\rank B' \le s$. In other words, $\mathcal{B}_{r,s}$ can be equivalently 
defined as the set of all pencils which are $\GL_2(\mathbb{F})$-equivalent to some $A' + \lambda B'$ satisfying $\rank A' \le r$ and 
$\rank B' \le s$.
\end{lem}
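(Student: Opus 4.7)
My plan is to prove the two directions separately, with the "if" direction being essentially by definition and the "only if" direction being the observation already sketched in the paragraph preceding the statement.

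For the ($\Leftarrow$) direction, suppose there is some $T \in \GL_2(\mathbb{F})$ such that $A' + \lambda B' = (E, E, T) \cdot (A + \lambda B)$ satisfies $\rank A' \le r$ and $\rank B' \le s$. Since $(E, E, T) \in \GL_{m,n,2}(\mathbb{F})$, the pencil $A' + \lambda B'$ lies in $\mathcal{O}(A,B)$, and hence $A + \lambda B \in \mathcal{B}_{r,s}$ by definition \eqref{Bset}.

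For the ($\Rightarrow$) direction, assume $A + \lambda B \in \mathcal{B}_{r,s}$. Then by \eqref{Bset} there exists a triple $(P, U, T) \in \GL_{m,n,2}(\mathbb{F})$ such that the pencil $\tilde{A} + \lambda \tilde{B} = (P, U, T) \cdot (A + \lambda B)$ satisfies $\rank \tilde{A} \le r$ and $\rank \tilde{B} \le s$. The idea is to strip off the $P$ and $U$ factors, keeping only the action of $T$. Concretely, apply $(P^{-1}, U^{-1}, E) \in \GL_{m,n,2}(\mathbb{F})$ to $\tilde{A} + \lambda \tilde{B}$ to obtain
\begin{equation*}
 A' + \lambda B' \;\triangleq\; (P^{-1}, U^{-1}, E) \cdot (\tilde{A} + \lambda \tilde{B}) \;=\; (E, E, T) \cdot (A + \lambda B),
\end{equation*}
where the second equality follows from the fact that the $\GL_m(\mathbb{F}) \times \GL_n(\mathbb{F})$ and $\GL_2(\mathbb{F})$ actions commute on the pencil (they act on disjoint modes of the underlying tensor). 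Explicitly, $A' = P^{-1} \tilde{A} U^{-\T}$ and $B' = P^{-1} \tilde{B} U^{-\T}$, so that $\rank A' = \rank \tilde{A} \le r$ and $\rank B' = \rank \tilde{B} \le s$ since left- and right-multiplication by invertible matrices preserves rank. This yields the desired $T$, and the second statement of the lemma is then just a restatement of this equivalence in set-builder form.

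The argument is essentially a routine bookkeeping of the group actions, and there is no real obstacle: the only point that must be made carefully is that the $\GL_{m,n}(\mathbb{F})$ and $\GL_2(\mathbb{F})$ factors of $\GL_{m,n,2}(\mathbb{F})$ act independently on the pencil, so their actions commute, and that left/right multiplication by invertible matrices preserves the ranks of the coefficient matrices.
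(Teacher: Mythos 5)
Your proof is correct and follows essentially the same route as the paper: the key step in both is to apply $(P^{-1},U^{-1},E)$ to strip off the $\GL_{m,n}(\mathbb{F})$ factors, using that $(P^{-1},U^{-1},E)\cdot(P,U,T)=(E,E,T)$ and that left/right multiplication by invertible matrices preserves the ranks of the coefficient matrices. The paper treats the converse direction as immediate from the definition, whereas you spell it out, but the substance is the same.
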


Let us now formally define the minimal ranks in terms of the introduced notation.

\begin{defn}
\label{def:min-ranks}
Let $A + \lambda B$ be an $m \times n$ pencil over $\mathbb{F}$. The \emph{minimal ranks} of $A + \lambda B$, denoted as 
$\rho(A,B) = (r,s)$, are defined as
\begin{align}
 \label{s-def}
s \triangleq & \ \min_{(t,u) \neq 0} \rank (t A + u B),  \\
r \triangleq & \  \min_{(t',u') \notin \sspan\{(t^\star,u^\star)\}} \rank(t' A + u' B), 
\label{r-def}
\end{align} 
where $(t^\star,u^\star)$ is any minimizer\footnote{Observe that such a minimizer is not unique. Besides the obvious family of 
minimizers of the form $(c t^\star, c u^\star)$, there may be also multiple non-collinear minimizers. For instance, for the pencil $E_4 
+ \lambda (a E_2 \oplus b E_2)$, with $a \neq b$, there are two non-collinear minimizers: $(-a,1)$ and $(-b,1)$.} of \eqref{s-def}. We 
obviously have $r\ge s$.
When denoting a pencil as $P(\lambda) = A + \lambda B$, we shall also use the notation $\rho(P) = \rho(A,B)$.
\end{defn}

The first thing to note is that $r$ is well-defined, i.e., its value is always the same regardless of the minimizer $(t^\star,u^\star)$
picked in the definition \eqref{r-def}. For different collinear minimizers of \eqref{s-def}, this is immediately clear. Now if two 
non-collinear minimizers $(t^\star,u^\star)$ and $(t^{\star\star}, u^{\star\star})$ exist for \eqref{s-def}, then $r = s$ must hold.
It is also clear from \eqref{s-def} and \eqref{r-def} that the minimal ranks of a pencil  $A + \lambda B$ are the ranks of 
matrices $A'$ and $B'$ of some pencil $A' + \lambda B'$ in the $\GL_{2}(\mathbb{F})$-orbit  of $A + \lambda B$. Indeed, 
taking $B' = t_{21} A + t_{22} B$ and $A' = t_{11} A + t_{12} B$, where $(t_{21},t_{22})$ and $(t_{11},t_{12})$ are the minimizers of 
\eqref{s-def} and \eqref{r-def}, respectively, then  $A + \lambda B = (E,E,T^{-1}) \cdot (A' + \lambda B')$, with $T = (t_{ij})  \in 
\GL_2(\mathbb{F})$.  Moreover, the minima in \eqref{s-def} and \eqref{r-def} are unchanged under a transformation from 
$\GL_{m,n}(\mathbb{F})$, implying that the value of $\rho$ is an invariant of $\GL_{m,n,2}$ action. Summarizing, we have:

\begin{prop}
 \label{prop:rho-orbit}
 Every $A' + \lambda B' \in \mathcal{O}(A,B)$ satisfies $\rho(A,B) = \rho(A',B')$. In particular, if $A' + \lambda B' \in 
\mathcal{O}(A,B)$ satisfies $(\rank A', \rank B') = \rho(A,B)$, we say that $A' + \lambda B'$ attains the minimal ranks of $A + \lambda 
B$.
\end{prop}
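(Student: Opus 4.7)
The plan is to verify that $\rho$ is preserved separately under the action of $\GL_{m,n}(\mathbb{F})$ and of $\GL_2(\mathbb{F})$, since every element of $\mathcal{O}(A,B)$ is obtained by composing these two kinds of transformations. The $\GL_{m,n}(\mathbb{F})$ part is immediate: if $A' = PAU^\T$ and $B' = PBU^\T$ with $(P,U) \in \GL_{m,n}(\mathbb{F})$, then for every $(t,u) \in \mathbb{F}^2$ one has $tA' + uB' = P(tA + uB)U^\T$, so $\rank(tA' + uB') = \rank(tA+uB)$. Thus the two minima in \eqref{s-def} and \eqref{r-def} are unchanged, and both components of $\rho$ are preserved.

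The more delicate part is the $\GL_2(\mathbb{F})$ action. Writing $T = (t_{ij})$, so that $A' = t_{11}A + t_{12}B$ and $B' = t_{21}A + t_{22}B$, a direct computation gives $tA' + uB' = \tilde t A + \tilde u B$ where $(\tilde t, \tilde u) = (t,u)\, T$. Since $T$ is invertible, the map $(t,u) \mapsto (t,u)\,T$ is a linear bijection of $\mathbb{F}^2$ sending nonzero vectors to nonzero vectors and one-dimensional subspaces to one-dimensional subspaces. Taking the minimum over all nonzero pairs therefore yields the same value $s$ for $(A',B')$ as for $(A,B)$. For $r$, fix a minimizer $(t^\star,u^\star)$ of \eqref{s-def} for $(A,B)$; then $(t^\star,u^\star)\, T^{-1}$ is a minimizer of \eqref{s-def} for $(A',B')$, and a pair $(t',u')$ lies outside $\sspan\{(t^\star,u^\star)\, T^{-1}\}$ if and only if $(t',u')\,T$ lies outside $\sspan\{(t^\star,u^\star)\}$. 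Hence the constrained minimum \eqref{r-def} takes the same value for both pencils, giving $\rho(A',B') = \rho(A,B)$.

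The main subtlety, already flagged in the footnote of \Cref{def:min-ranks}, is the possibility of several non-collinear minimizers of \eqref{s-def}, which a priori makes the choice of $(t^\star,u^\star)$ in \eqref{r-def} ambiguous. However, as observed just after the definition, in that situation necessarily $r = s$, and this equality is trivially preserved by the bijection above; so the argument goes through without further refinement. Composing the two invariance statements establishes the equality $\rho(A,B) = \rho(A',B')$ for every $A'+\lambda B' \in \mathcal{O}(A,B)$, after which the ``in particular'' clause is just the terminological definition of attaining the minimal ranks.
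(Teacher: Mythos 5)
Your proof is correct and follows the same underlying strategy as the paper: decompose the $\GL_{m,n,2}(\mathbb{F})$ action into a $\GL_{m,n}(\mathbb{F})$ part (under which $\rank(tA+uB)$ is manifestly invariant) and a $\GL_2(\mathbb{F})$ part (under which the set of matrices $\{tA+uB\}$ and the lattice of one-dimensional subspaces of coefficients are permuted). The paper states this rather tersely in the discussion preceding the proposition, mostly emphasizing that the minimal ranks are \emph{attained} by some pencil in the $\GL_2$-orbit and that $\GL_{m,n}$ leaves each rank unchanged; you make the $\GL_2$ step fully explicit via the row-vector bijection $(t,u)\mapsto (t,u)T$ and its effect on the constraint set in \eqref{r-def}, which is a welcome clarification rather than a genuine departure. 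Your remark about non-collinear minimizers is handled correctly and is consistent with the paper's own observation that $r=s$ in that case.
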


From the above discussion, $\rho(A,B) = (r,s)$ implies $A + \lambda B \in \mathcal{B}_{r,s}$. However, the converse is not true. For 
instance, $E_n + \lambda E_n \in \mathcal{B}_{n,n}$ but $\rho(E_n, E_n) = (n,0)$. In general, if $A = c B$ or $B = c A$ for 
some $c \in \mathbb{F}$ (including the possibility $c = 0$), then $\rho(A,B) = (r,0)$  with $r = \max\{\rank A, \rank B\}$. 
Conversely, $s = 0$ only if $A$ and $B$ are proportional. We thus have the following result. 

\begin{lem}
 \label{lem:rho-r-0}
A pencil $A + \lambda B$ satisfies $\rho(A,B) = (r,0)$ if and only if $A$ and $B$ are proportional. Furthermore, $r = 
\max\{\rank A, \rank B\}$.
\end{lem}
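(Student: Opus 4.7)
The plan is to split into the two implications of the iff, then derive the ``furthermore'' clause by computing the rank of an arbitrary non-collinear combination in terms of the one fixed by proportionality. I will take care throughout to allow the degenerate case $c=0$, in which one of $A,B$ vanishes, since proportionality in the statement is meant in this weak sense.

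For the backward direction, I would suppose $A$ and $B$ are proportional, that is, $tA + uB = 0$ for some $(t,u)\neq(0,0)$. Then the rank $0$ is achieved in \eqref{s-def}, so $s=0$ and one may pick $(t^\star,u^\star)=(t,u)$ as a minimizer. To evaluate $r$, I would observe that every pair $(t',u')\notin\sspan\{(t^\star,u^\star)\}$ is linearly independent from $(t^\star,u^\star)$, hence together they form a basis of $\mathbb{F}^2$; this lets me write either $A$ or $B$ (whichever is nonzero; if both vanish the statement is trivial) as a scalar multiple of $t'A+u'B$, forcing $\rank(t'A+u'B)\ge \max\{\rank A,\rank B\}$, while the reverse inequality follows from $t'A+u'B$ being a linear combination. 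Since this holds for every admissible $(t',u')$, the minimum $r$ equals $\max\{\rank A,\rank B\}$.

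For the forward direction, I would suppose $\rho(A,B)=(r,0)$. The definition \eqref{s-def} with $s=0$ delivers some $(t^\star,u^\star)\neq 0$ with $t^\star A+u^\star B=0$. If $u^\star\neq 0$ this yields $B=-(t^\star/u^\star)A$; if $u^\star=0$ then $t^\star\neq 0$ and $A=0$, so $A=0\cdot B$. In either case $A$ and $B$ are proportional in the stated sense, which closes the iff. The formula $r=\max\{\rank A,\rank B\}$ then follows from the computation already done in the backward direction.

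The main obstacle, modest as it is, lies in the bookkeeping around non-uniqueness of the minimizer $(t^\star,u^\star)$ and the degenerate case where $A$ or $B$ is the zero matrix: one must check that the argument connecting $\rank(t'A+u'B)$ to $\max\{\rank A,\rank B\}$ still goes through when one of the two matrices vanishes. This is handled by noting that if, say, $A=0$, then every non-collinear $(t',u')$ necessarily has $u'\neq 0$, so $t'A+u'B=u'B$ has rank $\rank B=\max\{\rank A,\rank B\}$, and symmetrically if $B=0$; no other pathology arises, so the proof is complete.
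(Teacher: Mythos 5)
Your argument is correct and matches the paper's own reasoning, which treats the lemma as an immediate consequence of the observation that $s=0$ in \eqref{s-def} forces some nontrivial linear combination of $A$ and $B$ to vanish, and that proportionality then makes every remaining combination $t'A+u'B$ a nonzero scalar multiple of whichever matrix has the larger rank. You merely spell out the basis/span bookkeeping and the degenerate cases that the paper leaves implicit.
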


The terminology ``minimal ranks'' is motivated by the fact that, if $\rho(A,B) = (r,s)$ and $A + \lambda B \in 
\mathcal{B}_{r',s'}$ for 
some $r' \ge s'$, then \emph{both} $r' \ge r$ and $s' \ge s$ must hold. The definitions in \eqref{s-def} and \eqref{r-def} clearly 
imply $A + \lambda B \notin \mathcal{B}_{r',s'}$ for any pair $r'$ and $s' < s$ or any pair $r' < r$ and $s' = s$.
It remains to show that $A + \lambda B \notin \mathcal{B}_{r',s'}$ also for $r > r' \ge s' > s$. 
Suppose on the contrary that $A + \lambda B \in \mathcal{B}_{r',s'}$ with $r > r' \ge s' > s$. 
This implies there exists $T \in \GL_2\left( \mathbb{F} \right)$ such that 
\begin{equation*}
   \rank (t_{11} A + t_{12} B) \le r' \quad \text{and} \quad \rank (t_{21} A + t_{22} B) \le s', \quad 
   \text{with} \quad r > r' \ge s' > s.
\end{equation*}
But then, $r' > s$ implies $(t_{11},t_{12}) \notin \sspan\{(t^\star,u^\star)\}$, where $(t^\star,u^\star)$ is a minimizer of 
\eqref{s-def}. This contradicts the definition of $r$ given by \eqref{r-def}.

\begin{prop}
\label{prop:r-s}
If $\rho(A,B) = (r,s)$ and $r' \ge s'$, then $A + \lambda B \in \mathcal{B}_{r',s'}$ if and only if $r' \ge r$ and $s' \ge s$.
\end{prop}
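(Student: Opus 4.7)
The plan is to prove both directions of the equivalence separately. The forward direction---that $r'\geq r$ and $s'\geq s$ imply $A + \lambda B \in \mathcal{B}_{r',s'}$---follows immediately from Definition~\ref{def:min-ranks} together with Lemma~\ref{lem:GL2}: by definition, one can build $T \in \GL_2(\mathbb{F})$ whose rows are, respectively, minimizers of \eqref{r-def} and \eqref{s-def} (necessarily linearly independent, so that $T$ is invertible), and acting with $(E,E,T)$ produces $A' + \lambda B'$ with $\rank A' = r \leq r'$ and $\rank B' = s \leq s'$. Hence $A + \lambda B \in \mathcal{B}_{r,s} \subseteq \mathcal{B}_{r',s'}$.

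For the converse, I would start by using Lemma~\ref{lem:GL2} to obtain $T = (t_{ij}) \in \GL_2(\mathbb{F})$ with $\rank(t_{11}A + t_{12}B) \leq r'$ and $\rank(t_{21}A + t_{22}B) \leq s'$. Since both rows of an invertible $T$ are nonzero, \eqref{s-def} at once yields $s \leq \min\{r',s'\} = s'$. The whole task then reduces to proving $r \leq r'$, which I expect to be the main (though still mild) obstacle, essentially due to the possible non-uniqueness of minimizers in \eqref{s-def} flagged by the footnote.

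To handle $r \le r'$, I would run a short case analysis on the structure of the minimizer set $M \subset \mathbb{F}^2 \setminus \{0\}$ of \eqref{s-def}. If $M$ lies in a single line $L = \sspan\{(t^\star,u^\star)\}$, then invertibility of $T$ forces at least one row of $T$ to fall outside $L$, and \eqref{r-def} bounds the rank of the corresponding combination of $A$ and $B$ from below by $r$; depending on which row it is, this gives either $r \leq r'$ directly or $r \leq s' \leq r'$. If instead $M$ contains two non-collinear vectors, then, as observed right after Definition~\ref{def:min-ranks}, one automatically has $r = s$, and the previously obtained bound $s \leq s'$ combined with $s' \leq r'$ yields $r \leq r'$ in this case too. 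Combining both directions then closes the proof.
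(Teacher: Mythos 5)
Your proposal is correct and follows essentially the same route as the paper's argument: reduce to a $\GL_2(\mathbb{F})$ transformation via Lemma~\ref{lem:GL2}, read off $s \le s'$ directly from \eqref{s-def}, and then use the fact that an invertible $T$ must have at least one row outside the minimizing span so that \eqref{r-def} forces $r \le \max\{r',s'\} = r'$. Your explicit case split according to whether the minimizer set of \eqref{s-def} lies on a single line is a slightly more careful rendering of the paper's compressed treatment of the remaining subcase $r > r' \ge s' > s$.
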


We consider now some examples.

\begin{exmp}
A regular $n \times n$ pencil $A + \lambda B$ can only belong to $\mathcal{B}_r$ if $r = n$. Indeed, $A + \lambda B \in \mathcal{B}_r$ 
implies $A$ and $B$ are proportional,  say $B = \alpha A$, and $\rank(t A + u B) = \rank((t + \alpha u) A) \le r$ for any 
$(t,u) \in \mathbb{F}^2$. As a concrete example, $E + \lambda E$ is clearly in $\mathcal{B}_n$ but not in any $\mathcal{B}_{r'}$ with 
$r' < n$.
\end{exmp}

\begin{exmp}
Regular $n \times n$ pencils can also be in $\mathcal{B}_{r,s}$ for some $n > r \ge s > 0$. For example, the regular $3 \times 3$ 
pencil\footnote{The dimensions of the zero blocks in that expression should be clear from the context.} $E_2 \oplus 0 + \lambda (0 
\oplus E_1)$ is in $\mathcal{B}_{2,1}$. Yet, the constraint $r + s \ge n$ must be satisfied. Indeed, $A + \lambda B \in 
\mathcal{B}_{r,s}$ means $A + \lambda B$ is $\GL_{m\times n\times 2}(\mathbb{F})$-equivalent to some $A' + \lambda B'$, with $\rank A' 
\le r$ and $\rank B' \le s$. If $r + s < n$, then clearly $\det(A' + \lambda B') \equiv 0$, implying neither $A' + \lambda B'$ nor $A 
+ \lambda B$ is regular.
\end{exmp}

The next two examples underline how the elementary divisors of a regular pencil determine its minimal ranks. A general result 
establishing this connection will be presented ahead.

\begin{exmp}
 The pencil 
 \begin{equation*}
   Q + \lambda E =
             \begin{pmatrix} a & b  \\ -b & a \end{pmatrix}
   + \lambda \begin{pmatrix} 1 & 0  \\ 0 & 1 \end{pmatrix} 
 \end{equation*} 
 with $b \neq 0$ has minimal ranks $\rho(Q,E) = (2,2)$ in $\mathbb{R}$, because
 \begin{equation*}
             \rank \begin{pmatrix} u + t a & t b  \\ -t b & u + t a \end{pmatrix} = 2
 \end{equation*}  
for any  $(t,u) \in \mathbb{R}^2 \setminus \{0\}$. However, this is not true in $\mathbb{C}$, because the $\GL_2(\mathbb{C})$ 
transformation $(Q,E) \mapsto ((-a + bi) E + Q, E )$ yields $B + \lambda E$, where
\begin{equation*}
 B = \begin{pmatrix} bi & b \\ -b & bi \end{pmatrix} = 
      \begin{pmatrix} bi & bi \\ -b & -b \end{pmatrix} \begin{pmatrix} 1 & 0 \\ 0 & -i \end{pmatrix} 
\end{equation*} 
has rank 1. This difference comes from the fact that $Q + \lambda E$ has a single elementary divisor over $\mathbb{R}$, namely 
$\lambda^2 + 2a\lambda + (a^2 + b^2)$, which cannot be factored into powers of linear forms since its roots are complex. 
In fact, $Q$ is diagonalizable over $\mathbb{C}$, since it is similar to
\begin{equation*}
 B' = \begin{pmatrix} a + bi & 0 \\ 0 & a - bi \end{pmatrix}
\end{equation*}
From \Cref{prop:rho-orbit}, we have $\rho(Q,E) = \rho(B',E)$, and it is not hard to see that $\rho(B',E) = (1,1)$. 
\end{exmp}

\begin{exmp}
Defining
\begin{equation*}
 H  = \begin{pmatrix} a & 1 & 0  \\ 0 & a & 1 \\ 0 & 0 & a \end{pmatrix}, \quad
 H'  = \begin{pmatrix} a & 1 & 0  \\ 0 & a & 0 \\ 0 & 0 & a \end{pmatrix}
   \quad \text{and} \quad 
 H'' = \begin{pmatrix} a & 1 & 0  \\ 0 & a & 0 \\ 0 & 0 & b \end{pmatrix}
\end{equation*} 
with $a \neq b$, we have $\rho(H,E) = (3,2)$, $\rho(H',E) = (3,1)$ and $\rho(H'',E) = (2,2)$. Note that the three 
considered pencils are regular and, in particular, the eigenvalues of the first two are the same but their elementary divisors are not. 
In fact, their invariant polynomials are $\{(a+\lambda)^3,1,1\}$ for $H + \lambda E$, $\{(a+\lambda)^2,a+\lambda,1\}$ for $H' 
+ \lambda E$, and $\{(a+\lambda)^2,b+\lambda,1\}$ for $H'' + \lambda E$. 
\end{exmp}

\subsection{Induced hierarchy of matrix pencils}
\label{sec:hierarchy}

The tensor rank induces a straightforward hierarchy in any tensor space, namely, $\mathcal{S}_0 \subset \mathcal{S}_1 \subset 
\mathcal{S}_2 \subset \dots$, where $\mathcal{S}_r$ contains all tensors of rank up to $r$. Our definition of minimal ranks also 
induces a hierarchy which can be expressed by using the definition of the sets $\mathcal{B}_{r,s}$ given in \eqref{Bset}. However, 
such a hierarchy is more intricate, as now we have, for instance, $\mathcal{B}_{r,s} \subset 
\mathcal{B}_{r+1,s}$ and $\mathcal{B}_{r,s} \subset \mathcal{B}_{r,s+1}$ but $\mathcal{B}_{r+1,s} \not\subset 
\mathcal{B}_{r,s+1}$ and $\mathcal{B}_{r,s+1} \not\subset \mathcal{B}_{r+1,s}$.

\Cref{fig:hierarchy} contains a diagram depicting the hierarchy of $\mathcal{B}_{r,s}$ sets in the space of $m \times n$ real pencils, 
denoted by $\mathcal{P}_{m,n}(\mathbb{R})$. 
We assume $m \le n$ without loss of generality, since $\mathcal{P}_{m,n}(\mathbb{R})$ and $\mathcal{P}_{n,m}(\mathbb{R})$ have 
identical structures. 
For even $m$, the set $\mathcal{B}_{m,m} \subset \mathcal{P}_{m,n}(\mathbb{R})$ is always non-empty, but for odd $m$ the set 
$\mathcal{B}_{m,m} \subseteq \mathcal{P}_{m,n}(\mathbb{R})$ is non-empty if and only if $m < n$.
This is because an $m \times m$ pencil has full minimal ranks if and only if it is regular and its elementary divisors cannot be 
written as powers of linear forms, as we shall prove in the next section. In $\mathcal{P}_{m,m}(\mathbb{R} )$, this means that a 
pencil $A + \lambda B$ satisfies $\rho(A,B) = (m,m)$ if and only if it is $\GL_{m,m,2}(\mathbb{R})$-equivalent to some other pencil 
$Q + \lambda E$ where no eigenvalue of $Q$ is in $\mathbb{R}$. Since complex-valued eigenvalues of a real matrix necessarily 
arise in pairs, this can evidently only happen for even values of $m$.
For concreteness, three examples concerning $\mathcal{P}_{2,2}(\mathbb{R})$, $\mathcal{P}_{2,3}(\mathbb{R})$ and 
$\mathcal{P}_{3,3}(\mathbb{R})$ are shown in \Cref{fig:hierarchy}.

\begin{figure}[h!]
\centering
 \includegraphics[width=0.8\textwidth]{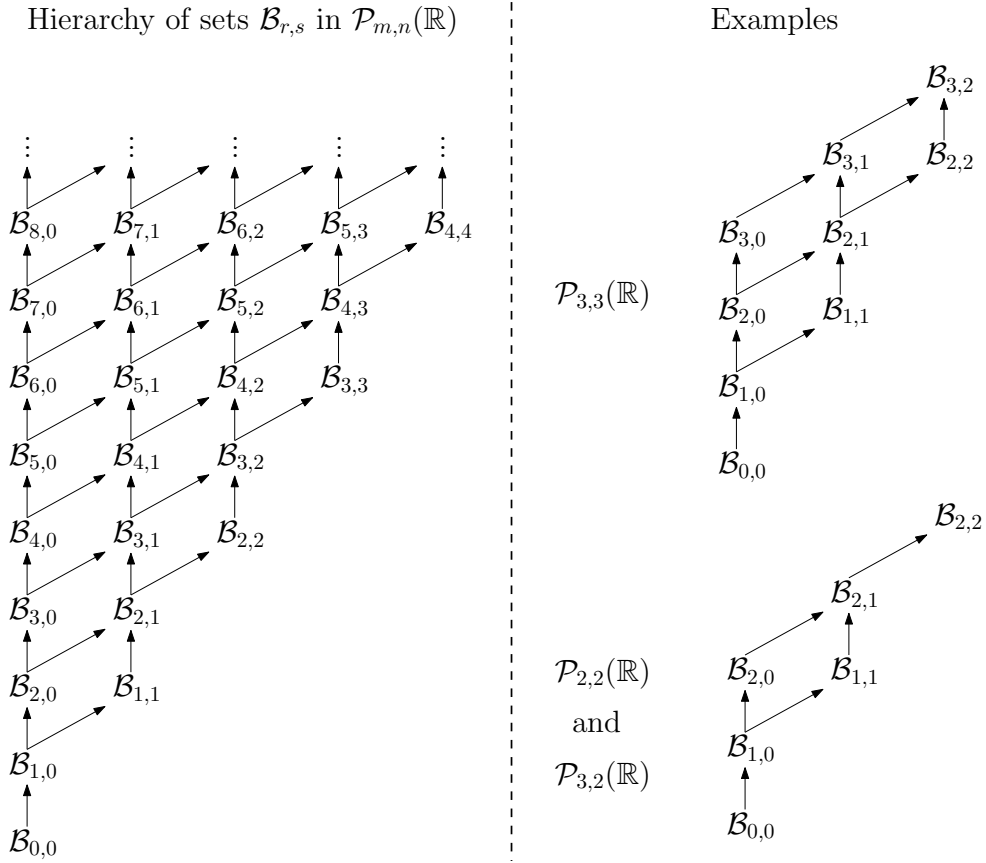}
\caption{Hierarchy of sets of real pencils in $\mathcal{P}_{m,n}(\mathbb{R})$ according to their minimal ranks: illustration of 
the general form (left) and concrete examples for three pencil spaces (right). The notation $\mathcal{B}_{r,s} \rightarrowTriangle 
\mathcal{B}_{r',s'}$ stands for $\mathcal{B}_{r,s} \subset \mathcal{B}_{r',s'}$. See \eqref{Bset} for a definition of the sets 
$\mathcal{B}_{r,s}$.}
 \label{fig:hierarchy}
\end{figure}

\subsection{Minimal ranks of Kronecker canonical forms}
\label{sec:Kronecker}

We now show how the minimal ranks of a pencil can be determined from its Kronecker canonical form. The notation $J_m(a) 
\triangleq a E_m + H_m$, where $H_m \triangleq \sum_{l=1}^{m-1} e_l \otimes e_{l+1}$, will be used for a 
Jordan block of size $m$ associated with the finite elementary divisor $(a + \lambda)^m$. In this definition, the vectors $e_l$ 
denote as usual the canonical basis vectors of their corresponding spaces. 
A canonical $v \times v$ block associated with an infinite elementary divisor will be expressed as $N_v(\lambda) \triangleq E_m 
+ \lambda H_v$.
Let us first consider regular pencils.

\begin{lem}
\label{lem:rho-regular} 
 Let $A + \lambda B$ be a regular $n \times n$ pencil and let 
$$ 
     A' + \lambda B' =  \left[ N_{v_1}(\lambda) \oplus \dots \oplus N_{v_j}(\lambda) \right] \oplus 
     \left[ (A_1 \oplus \dots \oplus A_k \oplus Q) + \lambda E \right]
$$ 
be its Kronecker canonical form, where the elementary divisors of $ Q + \lambda E_q$ cannot be factored into powers of linear forms
and $A_l = J_{m_l}(a_l)$ for some $a_l \in \mathbb{F}$, $l \in \{1,\ldots,k\}$. Let $k_s$ be the largest number of blocks $A_l$ whose 
elementary divisors share a common factor $a^\star + \lambda$ and $k_r$ be the second largest number of blocks $A_l$ whose elementary 
divisors share a common factor $a^{\star\star} + \lambda$ (with $a^{\star\star} \neq a^{\star}$). Then 
\begin{equation}
\label{rho-reg}
  \rho(A,B) = \left( n - k'_r, n - k'_s \right),
\end{equation}
where $k'_s$ and $k'_r$ are the first and second largest components of $(j,k_s,k_r)$, respectively. 
\end{lem}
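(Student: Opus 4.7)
By \Cref{prop:rho-orbit}, $\rho$ is $\GL_{m,n,2}(\mathbb{F})$-invariant, so it suffices to compute $\rho(A',B')$ from the Kronecker canonical form. The plan is to express $\rank(tA' + uB')$ as a sum of block contributions, write it in the form $n - \delta(t,u)$ for a suitable combinatorial function $\delta$, and then minimize according to \Cref{def:min-ranks}.

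Computing block by block: (i) an infinite block $N_v(\lambda) = E_v + \lambda H_v$ gives $tE_v + uH_v$, upper triangular with $t$ on the diagonal, so its rank is $v$ for $t \neq 0$ and $v - 1$ for $t = 0$ (since $\rank H_v = v - 1$); (ii) a Jordan block $J_{m_l}(a_l) + \lambda E_{m_l}$ gives $(ta_l + u)E_{m_l} + tH_{m_l}$, which has rank $m_l$ except when $t \neq 0$ and $u = -t a_l$, where the rank drops to $m_l - 1$; (iii) the block $Q + \lambda E_q$ always has full rank $q$, because no $-u/t \in \mathbb{F}$ is an eigenvalue of $Q$ (its elementary divisors do not factor into linear powers), and for $t = 0$ the block is $uE_q$. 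Summing and using $n = \sum v_i + \sum m_l + q$, I obtain
\begin{equation*}
  \rank(tA' + uB') = n - \delta(t,u),
\end{equation*}
where $\delta(0,u) = j$ for $u \neq 0$ and $\delta(t,u) = \#\{l : a_l = -u/t\}$ for $t \neq 0$.

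To finish, I maximize $\delta$. At $t = 0$, $\delta = j$; for $t \neq 0$, $\delta(t,u)$ is maximized by choosing $-u/t$ to equal the value $a^\star$ shared by the largest number of Jordan blocks, giving $k_s$. Hence $s = n - \max(j, k_s) = n - k'_s$. For $r$, I fix a minimizer $(t^\star, u^\star)$ of $s$ and maximize $\delta$ over directions not collinear with $(t^\star, u^\star)$: if $j > k_s$, the minimizer is $(0,1)$ and the next best comes from $-u'/t' = a^\star$, yielding $k_s$; if $k_s > j$, the minimizer satisfies $-u^\star/t^\star = a^\star$, and the next best yields $\max(j, k_r)$; if $j = k_s$, two non-collinear minimizers of $s$ already exist, forcing $r = s = n - k_s$. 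In every configuration the relevant quantity is the second-largest entry of the triple $(j, k_s, k_r)$, namely $k'_r$, so $r = n - k'_r$.

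The main obstacle is keeping the case analysis for $r$ uniform across all ties among $j$, $k_s$, and $k_r$, and verifying that the formula $r = n - k'_r$ remains correct when multiple non-collinear minimizers of $s$ arise (the situation flagged in the footnote of \Cref{def:min-ranks}). The rank computations for individual blocks are routine linear algebra; the combinatorial bookkeeping in the min-max step is where care is needed.
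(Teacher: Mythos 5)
Your proof is correct and follows essentially the same strategy as the paper: compute $\rank(tA' + uB')$ block by block (infinite blocks drop rank only at $t=0$, Jordan blocks only at $u=-ta_l$, the $Q$-block never), then perform a three-way case analysis on the ordering of $j$, $k_s$, $k_r$. The only cosmetic difference is that the paper exhibits the minimizing $\GL_2$ transformation explicitly in each case, while you package the same bookkeeping into the function $\delta(t,u)$ and maximize it.
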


\begin{proof}
By virtue of \Cref{prop:rho-orbit}, we have $\rho(A, B) = \rho(A',B')$. It thus suffices to show that $\rho(A',B') = (n - k'_r, n - 
k'_s)$. The steps are as follows.
\begin{enumerate}

\item First, we claim that $t Q + u E_q$ is nonsingular for any $(t,u) \neq 0$. 
This claim is trivially true if $t=0$ and $u \neq 0$. For $t \neq 0$ (and $u$ possibly null), the argument is as follows. Suppose for 
a contradiction that $t Q + u E_q$ is singular, with $t \neq 0$. Without loss of generality, we may take $t=1$. Then, there 
exists $P \in \GL_q(\mathbb{F})$ such that 
\begin{equation*}
 P \left( Q + u E_q \right) P^{-1} = P Q P^{-1} + u E_q = F \oplus J_p(0),
\end{equation*} 
where $0 < p \le q$. But then, $P Q P^{-1} = (F - u E_{q-p}) \oplus J_p(-u)$, implying $Q + \lambda E_q$ has an elementary 
divisor of the form $(-u + \lambda)^p$, which contradicts the hypothesis that the elementary divisors of $Q$ cannot be written as
powers of linear forms. As a consequence, $\rho(Q,E_q) = (q,q)$. In particular, if $Q = A'$ (i.e., $n = q$), then \eqref{rho-reg} 
yields $\rho(A',B') = (n,n)$ (because $j = k_s = k_r = 0$), as required.

\item  Now, note that $\rank\, (t J_{m}(a) + u E_m) < m$ for $(t,u) \neq 0$ if and only if $(t,u) = c(1,-a)$ for some $c \neq 0$, in 
which case $\rank\, (t J_{m}(a) + u E_m) = \rank J_{m}(0) = m-1$. Moreover, $\rank\, (t E_v + u H_v) < v$ for $(t,u) \neq 0$ if and 
only if $(t,u) = (0,c)$ for some $c \neq 0$, implying $\rank\, (t E_v + u H_v) = v-1$.
Hence, since by definition $k_s \ge k_r$, we have three cases:
\begin{enumerate}[(i)]

   \item If $k_s \ge j \ge k_r$, then we can apply the following transformation
      \begin{equation*}
      (A',B') \mapsto (B', A' -a^\star B') 
      \end{equation*}    
to obtain an $\GL_{n,n,2}(\mathbb{F})$-equivalent pencil attaining the minimal ranks of $(A',B')$. Indeed, $\rank \, (A' -a^\star B') 
= n - k_s$ is minimal among all linear combinations $t A' + u B'$ with $(t,u) \neq 0$. Hence, \eqref{s-def} must equal $n - k_s = n - 
k'_s$. Furthermore, $\rank B' = n - j$ is minimal among all linear combinations $t A' + u B'$ with $(t,u) \neq c (1, -a^\star)$. So, 
\eqref{r-def} must equal $r' = n - j = n - k'_r$.

   \item If $k_s \ge k_r \ge j$, then using a similar argument we deduce that the $\GL_{n,n,2}(\mathbb{F})$-equivalent pencil $(A' 
-a^{\star\star} B') + \lambda( A' -a^\star B' )$ attains the minimal ranks of $(A',B')$, showing that $\rho(A',B') = (n-k_r, n-k_s) 
= (n-k'_r, n-k'_s)$.

  \item Finally, if $j \ge k_s \ge k_r$, then following the same line of thought we have that $(A' -a^{\star} B') + \lambda B'$ 
attains the minimal ranks of $(A',B')$, that is, $\rho(A',B') = (n-k_r, n-j) = (n-k'_r, n-k'_s)$.

\end{enumerate}

\end{enumerate}
\end{proof}

For a singular $m \times n$ canonical pencil $A + \lambda B$ having no regular part, computing the minimal ranks is 
straightforward, because $a_{i,j}$ and $b_{i,j}$ cannot be both nonzero for any given pair of indices $(i,j)$. Indeed, the 
canonical block $L_k(\lambda)$ related to a minimal index $k$ associated with the columns is the $k \times (k+1)$ pencil of 
the form
\begin{equation*}
L_k(\lambda) \triangleq
 \begin{pmatrix}
    \lambda & 1 & & & \\
     & \lambda & 1 & & \\
     &  & \ddots & \ddots & \\
     & &  & \lambda & 1
 \end{pmatrix}.
\end{equation*} 
Any $\GL_2(\mathbb{F})$ transformation applied to $L_k(\lambda)$ yields some pencil $A' + \lambda B'$ such that $\rank 
A' = \rank B' = k$. In other words, $L_k(\lambda)$ has minimal ranks $(k,k)$. By the same argument, the canonical block $R_l(\lambda)$ 
related to a minimal index $l$ associated with the rows, which is an $(l+1) \times l$ pencil defined as $R_l(\lambda) \triangleq 
L^\T_l(\lambda)$, has minimal ranks $(l,l)$. The special case $k = 0$ (or $l = 0$) also adheres to that 
rule, as its minimal ranks are $(0,0)$.
Now, adjoining blocks having these forms yields a singular pencil 
$L_{k_1}(\lambda) \oplus \dots \oplus L_{k_p}(\lambda) \oplus R_{l_1}(\lambda) \oplus \dots \oplus R_{l_q}(\lambda)$ whose minimal 
ranks are clearly the sum of the minimal ranks of the blocks. Note that this is true even for the zero minimal indices $k_1 = \dots = 
k_{p'} = l_1 = \dots = l_{q'} = 0$, since they correspond to $p'$ null columns and $q'$ null rows, and so the minimal ranks must be 
bounded by $\min\{m - q', n-p'\}$. We have arrived at the following result. 

\begin{lem}
\label{lem:rho-singular}
Let $A + \lambda B$ be a singular $m \times n$ pencil having the form $A + \lambda B = L_{k_1}(\lambda) \oplus \dots \oplus 
L_{k_p}(\lambda) \oplus R_{l_1}(\lambda) \oplus \dots \oplus 
R_{l_q}(\lambda)$, where $k_1, \ldots, k_p$ are the minimal indices associated with its columns (henceforth called minimal column 
indices) and $l_1,\ldots,l_q$ are the minimal indices associated with its rows (minimal row indices). 
Then, \begin{equation*}
  \rho(A,B) = (\bar{s},\bar{s}), \quad \text{where } \bar{s} = k_1 + \dots + k_p + l_1 + \dots + l_q.
\end{equation*}
\end{lem}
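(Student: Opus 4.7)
The plan is to reduce the statement to two elementary facts: the minimal ranks of a single canonical block $L_k$ or $R_l$, and the additivity of minimal ranks under a direct sum of such blocks.

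First I would verify that $\rho(L_k(\lambda)) = (k,k)$. Writing $L_k(\lambda) = A_k + \lambda B_k$ with $A_k$ carrying ones in positions $(i,i+1)$ and $B_k$ carrying ones in positions $(i,i)$ for $i = 1,\ldots,k$, any non-trivial linear combination $t A_k + u B_k$ is a $k \times (k+1)$ bidiagonal matrix whose $k$ rows are plainly linearly independent for every $(t,u) \neq (0,0)$; hence $\rank(t A_k + u B_k) = k$ identically. Both minima in \Cref{def:min-ranks} are then forced to equal $k$, yielding $\rho(L_k) = (k,k)$. The case of $R_l$ follows by transposition, and degenerate blocks with $k = 0$ or $l = 0$ (empty columns or rows) contribute $(0,0)$, matching the claim on those summands.

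Next I would establish additivity. Because the blocks $L_{k_i}$, $R_{l_j}$ are arranged along the diagonal with pairwise disjoint row and column supports, the block-diagonal identity $\rank(tA + uB) = \sum_i \rank(tA_i + uB_i)$ holds for every $(t,u) \in \mathbb{F}^2$, where $A_i + \lambda B_i$ denotes the $i$-th block. Applying the first step block by block gives $\rank(tA + uB) = \bar s$ for every $(t,u) \neq (0,0)$, with $\bar s = k_1 + \dots + k_p + l_1 + \dots + l_q$ as in the statement. Since the map $(t,u) \mapsto \rank(tA + uB)$ is thus constant on $\mathbb{F}^2 \setminus \{0\}$, both minima \eqref{s-def} and \eqref{r-def} evaluate to $\bar s$, proving $\rho(A,B) = (\bar s, \bar s)$.

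The argument is essentially a formalisation of the informal sketch already given just before the lemma, so no substantial obstacle is expected. The only point requiring a bit of care is the bookkeeping for null rows and columns, which is handled automatically once $L_0$ and $R_0$ are interpreted respectively as an empty column block and an empty row block, each contributing $0$ to both the dimensions and the minimal ranks.
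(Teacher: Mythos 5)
Your proof is correct and follows essentially the same route as the paper's (which gives only a sketch before the lemma). The one place you are a bit more careful is the additivity step: rather than merely asserting that minimal ranks add over a direct sum (which is not true for arbitrary blocks — e.g.\ $(a_1 + \lambda) \oplus (a_2 + \lambda)$ with $a_1 \neq a_2$ has minimal ranks $(1,1)$, not $(2,0)$), you observe that for these particular blocks the map $(t,u)\mapsto\rank(tA_i+uB_i)$ is \emph{constant} on $\mathbb{F}^2\setminus\{0\}$, so the block-diagonal rank identity makes the total rank constant at $\bar s$ and both minima in \Cref{def:min-ranks} trivially evaluate to $\bar s$; this justification is welcome and matches the spirit of the paper's argument.
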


For an arbitrary $m \times n$ pencil $A + \lambda B$, the block diagonal structure of its Kronecker canonical form allows a direct 
combination of the previous results, yielding the main theorem of this section.

\begin{theo}
\label{theo:rho-canonical}
Let $A + \lambda B$ be an arbitrary $m \times n$ pencil with Kronecker canonical form $S(\lambda) \oplus (A' + \lambda B')$, where 
$S(\lambda)$ is its singular part and $A' + \lambda B'$ is regular. Suppose $S(\lambda)$ has minimal column indices $k_1, \ldots, k_p$ 
and minimal row indices $l_1, \ldots, l_q$. Define $\bar{s} = k_1 + \dots + k_p + l_1 + \dots + l_q$. Then, its minimal ranks are given 
by
\begin{equation}
 \label{rho-canonical}
  \rho(A, B) = (r' + \bar{s}, s' + \bar{s}), 
\end{equation} 
where $(r', s') = \rho(A',B')$, whose components are given by \Cref{lem:rho-regular}.
\end{theo}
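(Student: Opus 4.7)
The plan is to use Proposition~\ref{prop:rho-orbit} to replace $A+\lambda B$ by its Kronecker canonical form $S(\lambda)\oplus(A'+\lambda B')$, which does not change the minimal ranks, and then to exploit the block-diagonal structure together with the fact, implicit in the proof of Lemma~\ref{lem:rho-singular}, that \emph{every} nonzero linear combination of the two matrices forming $S(\lambda)$ has rank exactly $\bar{s}$. Writing $S(\lambda)=S_A+\lambda S_B$, so that $A=S_A\oplus A'$ and $B=S_B\oplus B'$, rank additivity on block-diagonal matrices gives
\begin{equation*}
\rank(tA+uB) \;=\; \rank(tS_A+uS_B) \;+\; \rank(tA'+uB') \qquad \text{for all } (t,u)\in\mathbb{F}^2.
\end{equation*}
The first summand is a constant $\bar{s}$ for every $(t,u)\neq 0$, because each $L_k(\lambda)$ (resp.\ $R_l(\lambda)$) block has the property that any nonzero combination of its two matrices has full row (resp.\ column) rank $k$ (resp.\ $l$), while zero rows/columns contribute $0$ independently of $(t,u)$.

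From this identity the theorem essentially falls out. First I would handle the lower bound: for any $(t,u)\neq 0$,
\begin{equation*}
\rank(tA+uB)=\bar{s}+\rank(tA'+uB')\ge \bar{s}+s',
\end{equation*}
by definition of $s'$. Moreover, the minimizers of $\rank(tA+uB)$ are exactly the minimizers of $\rank(tA'+uB')$, so picking a minimizer $(t^\star,u^\star)$ of \eqref{s-def} for $(A,B)$ also serves for $(A',B')$. The second minimum defining $\rho$ is then computed over the same excluded line, giving
\begin{equation*}
\min_{(t',u')\notin\sspan\{(t^\star,u^\star)\}}\rank(t'A+u'B)=\bar{s}+r'.
\end{equation*}
For the matching upper bound, I would simply take the explicit $\GL_2(\mathbb{F})$ transformation produced in the proof of Lemma~\ref{lem:rho-regular} that attains $(r',s')$ for the regular part, and apply it to the full pencil; since the singular block contributes $\bar{s}$ to each rank regardless of $T$, the transformed pencil has rank pair exactly $(r'+\bar{s},s'+\bar{s})$. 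Combining the two bounds and invoking Proposition~\ref{prop:r-s} yields \eqref{rho-canonical}.

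There is no real obstacle here; the only points that deserve care are the edge cases (no singular part, where $\bar{s}=0$ and the result reduces to Lemma~\ref{lem:rho-regular}; no regular part, where we set $(r',s')=(0,0)$ and recover Lemma~\ref{lem:rho-singular}; and the degenerate situation $s'=0$ in which $A'$ and $B'$ are proportional, handled by Lemma~\ref{lem:rho-r-0}) and the verification that a minimizer of the regular-part problem remains a minimizer of the full problem, which is immediate from the rank-additivity identity above.
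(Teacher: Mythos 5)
Your proof is correct and follows the same route the paper takes — the paper does not actually write out a proof of Theorem~\ref{theo:rho-canonical}, noting only that ``the block diagonal structure of its Kronecker canonical form allows a direct combination of the previous results,'' and your argument fleshes out exactly that: block-diagonal rank additivity plus the observation (implicit in the proof of Lemma~\ref{lem:rho-singular}) that the singular part contributes a constant rank $\bar{s}$ to every nonzero linear combination. One small stylistic remark: your ``lower bound'' step already proves equality outright, since $\rank(tA+uB)=\bar{s}+\rank(tA'+uB')$ for all $(t,u)\ne 0$ pins down both minima in Definition~\ref{def:min-ranks} exactly, so the separate ``upper bound'' paragraph is redundant (and Proposition~\ref{prop:r-s} is not actually needed); likewise the $s'=0$ case requires no special treatment, as the rank-additivity identity handles it automatically.
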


The above result implies that both minimal ranks of an $n \times n$ singular pencil must be strictly smaller than $n$. This is because 
the sum of the minimal indices of its singular part (which equals $\bar{s}$ in \eqref{rho-canonical}) can never attain the largest 
dimension of that part. Hence, if an $n \times n$ pencil has minimal ranks $(n,n)$ or $(n,n-1)$, then it is necessarily regular. 
In particular, $n \times n$ pencils with full minimal ranks can be characterized as follows.

\begin{corol}
\label{cor:full-min-rank}
An $n \times n$ pencil $A + \lambda B$ satisfies $\rho(A,B) = (n,n)$ if and only if it is regular and its elementary divisors cannot be 
written as powers of linear forms.
\end{corol}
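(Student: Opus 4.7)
The plan is to derive the corollary as a fairly immediate consequence of \Cref{theo:rho-canonical} and \Cref{lem:rho-regular}, by analyzing each direction of the equivalence separately.

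For the forward direction, I would first show that $\rho(A,B) = (n,n)$ forces $A + \lambda B$ to be regular. The dimensional observation in the paragraph preceding the statement handles this: if the Kronecker canonical form contains a nontrivial singular part with $p$ blocks $L_{k_i}$ and $q$ blocks $R_{l_j}$, then this singular part has $\bar{s} + q$ rows and $\bar{s} + p$ columns; the full pencil being square forces $p = q$, and by \eqref{rho-canonical} we get $\rho(A,B) = (r' + \bar{s}, s' + \bar{s})$ with $r', s' \le n - \bar{s} - p$, which cannot equal $(n,n)$ unless $p = q = 0$, i.e., the singular part is empty.

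Once regularity is established, I would invoke \Cref{lem:rho-regular} directly: the identity $(n,n) = (n - k'_r, n - k'_s)$ gives $k'_r = k'_s = 0$, hence $j = k_s = k_r = 0$. This says the canonical form contains no infinite-divisor blocks $N_v(\lambda)$ and no Jordan blocks $J_{m_l}(a_l)$, so the only block present is the $Q + \lambda E_q$ piece, whose elementary divisors by construction cannot be factored into powers of linear forms.

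For the reverse direction, I would argue that if $A + \lambda B$ is regular and none of its elementary divisors are powers of linear forms, then its Kronecker canonical form must be $Q + \lambda E_n$: any $N_v$ block would contribute $\lambda^v$ and any Jordan block $J_m(a)$ would contribute $(a + \lambda)^m$ to the list of elementary divisors. In the proof of \Cref{lem:rho-regular}, item 1 already established that $\rho(Q, E_q) = (q,q)$ in precisely this situation, and combining this with \Cref{prop:rho-orbit} yields $\rho(A,B) = (n,n)$. I do not anticipate any real obstacle: the work is essentially bookkeeping on the Kronecker canonical form, since both the dimensional argument for the singular part and the case analysis for the regular part have been carried out in the preceding lemmas. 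The only subtlety worth stressing is that \emph{every} elementary divisor must fail to be a power of a linear form, as a single Jordan block already contributes to $k_s$ and prevents $\rho$ from reaching $(n,n)$.
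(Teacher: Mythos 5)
Your proof is correct and follows essentially the same route the paper intends: the dimensional argument from Theorem~\ref{theo:rho-canonical} (sketched in the paragraph preceding the corollary) rules out a nontrivial singular part, and Lemma~\ref{lem:rho-regular} then reduces the condition $\rho(A,B)=(n,n)$ to $j=k_s=k_r=0$, i.e., to the Kronecker form consisting solely of a $Q+\lambda E$ block whose elementary divisors are not powers of linear forms. You spell out the $p=q$ step (forced by squareness of the regular part) a bit more explicitly than the paper does, but the substance is the same.
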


\subsection{Classification of $\GL_{m,n,2}(\mathbb{R})$-orbits for $m,n \le 4$}
\label{sec:classif}

Using \Cref{theo:rho-canonical}, a complete classification of all Kronecker canonical forms of $m \times n$ pencils over 
$\mathbb{R}$ is provided in \Cref{tab:1,tab:2,tab:3,tab:4} for $m, n \in \{1,\ldots,4\}$. Each such form is associated with a 
family of $\GL_{m,n,2}(\mathbb{R})$-orbits. We denote canonical blocks whose elementary divisors are powers of second-order 
irreducible polynomials by
\begin{equation*}
  Q_{2k}(a,b) = E_k \kron \begin{pmatrix} a & b \\ -b & a \end{pmatrix} + J_2(0) \kron E_k \quad \text{with} \quad b \neq 0,
\end{equation*}
where $\kron$ denotes the Kronecker product. Observe that, because we consider orbits of $\GL_{m,n,2}(\mathbb{R})$ action, we 
can represent each family of orbits by a canonical form having no infinite elementary divisors (which can always be avoided by 
employing a $\GL_{2}(\mathbb{R})$ transformation).

It can be checked that each described family with $m, n \le 3$ corresponds to a single orbit,\footnote{Atkinson 
\cite{Atkinson1991} had already pointed out that, over an algebraically closed field $\mathbb{F}$, there are only finitely many 
$\GL_{3,n,2}(\mathbb{F})$-orbits for any $n$. Thus, over $\mathbb{R}$ this must be true of orbits whose elementary divisors are 
powers of linear forms.} except for $\mathscrc{R}'_{3,2}$, which encompasses an infinite number of non-equivalent orbits. All families 
having dimensions $m = 3$ and $n = 4$ (or $m = 4$ and $n = 3$) also contain only one orbit each. These properties can be 
verified by inspecting the equivalent pencils of \Cref{tab:equiv} shown ahead in \Cref{sec:equiv}: for $\min\{m,n\} \le 3$, the only 
family whose given canonical form depends on a parameter is that of $\mathscrc{R}'_{3,2}$.
For $m = n = 4$, infinitely many non-equivalent orbits are contained by each family in general. 
To avoid redundancies, families of orbits having zero minimal indices are omitted in the tables, since each such family corresponds 
to some other one of lower dimensions. For instance for $m=n=4$, if a singular pencil $A + \lambda B$ has minimal indices 
$k_1 = k_2 = l_1 = l_2 = 0$, then it can be expressed in the form $0 \oplus (A' + \lambda B')$, where both blocks in this 
decomposition have size $2 \times 2$. So, the canonical form of $A' + \lambda B'$ can be inspected to determine the properties 
of $A + \lambda B$. Similarly, not all combinations of canonical blocks are included for the singular part, because the shown 
properties remain the same if we transpose these blocks. To exemplify, it can be checked that $L_1 \oplus R_2$ and $R_1 \oplus L_2$ 
have the same dimensions, tensor rank, multilinear rank and minimal ranks, because the roles played by column and row minimal indices 
are essentially the same, up to a transposition.

A family is denoted with the letter $\mathscrc{R}$ or the letter $\mathscrc{S}$ if it encompasses regular or singular pencils, 
respectively. The subscript indices of each family indicate its minimal ranks, and primes are used to distinguish among otherwise 
identically labeled families. The tensor rank of each canonical form was determined using Corollary 2.4.1 of Ja'Ja' \cite{JaJa1979} 
and Theorem 4.6 of Sumi et al.~\cite{Sumi2009}, which requires taking into account the minimal indices of the pencil and also its 
elementary divisors. The values given in the column ``multilinear rank'' were determined by inspection; for a definition see 
\cite{deSilva2008}. Specifically, for an $m \times n$ pencil $A + \lambda B$ viewed as an $m \times n \times 2$ tensor $A 
\otimes e_1 + B \otimes e_2$, the multilinear rank is the triple $(r_1,r_2,r_3)$ satisfying
\begin{equation*}
 r_1 = \rank \begin{pmatrix} A & B \end{pmatrix},     \quad 
 r_2 = \rank \begin{pmatrix} A^\T & B^\T \end{pmatrix}, \quad 
 r_3 = \dim \sspan \{A, B\}.
\end{equation*} 
It should be noted that, in the above equation, $\sspan \{A, B\}$ denotes the subspace of $\mathbb{R}^{m \times n}$ spanned by 
the matrices $A$ and $B$, whose dimension is at most two. Finally, in order to determine the minimal ranks (column labeled ``$\rho$'') 
of each family, \Cref{theo:rho-canonical} was applied.

We point out that another classification of pencil orbits is given by Pervouchine 
\cite{Pervouchine2004}, but his study is concerned with closures of orbits and pencil bundles, not with tensor rank or minimal ranks. 
Our list is therefore a complement to the one he provides. Furthermore, the hierarchy of closures of pencil bundles he has 
presented bears no direct connection with the hierarchy of sets $\mathcal{B}_{r,s}$ we present in \Cref{sec:hierarchy}, which is 
easily determined by the numbers $r,s$ associated with each such set.

\begin{table}[t!]
\begin{center}
\def\arraystretch{1.6}
\begin{tabular}{c | c |c|c|c|c}
\hline 
\multirow{2}{*}{Family} & \multirow{2}{*}{Canonical form} & $m \times n$ & tensor & multilinear & \multirow{2}{*}{$\rho$} \\[-8pt]
& & &   rank   &    rank    &   \\
\hline
$\mathscrc{R}_{1,0}$ & $a + \lambda\, E_1$  & $1 \times 1$ & 1  & $(1,1,1)$ & $(1,0)$ \\[2pt] 
\hline 
$\mathscrc{S}_{1,1}$ & $L_1(\lambda)$ & $1 \times 2$ & 2 & $(1,2,2)$ & $(1,1)$ \\[2pt] 
\hline
\end{tabular}
\end{center}
 \caption{Families of canonical forms of $1 \times 1$ and $1 \times 2$ real pencils having no zero minimal indices, with $a_i 
\neq a_j$ for $i \neq j$.}
 \label{tab:1}
\end{table}

\begin{table}[t!]
\begin{center}
\def\arraystretch{1.6}
\begin{tabular}{c | c |c|c|c|c}
\hline 
\multirow{2}{*}{Family} & \multirow{2}{*}{Canonical form} & $m \times n$ & tensor & multilinear & \multirow{2}{*}{$\rho$} \\[-8pt] 
& & &   rank   &    rank    &   \\
\hline
$\mathscrc{R}_{1,1}$ & $a_1 \oplus a_2 + \lambda\, E_2$ & $2 \times 2$ & 2  & $(2,2,2)$ & $(1,1)$ \\ 
$\mathscrc{R}_{2,0}$ & $a \oplus a + \lambda\, E_2$ & $2 \times 2$  & 2  & $(2,2,1)$ & $(2,0)$ \\[2pt]
\hline 
$\mathscrc{R}_{2,1}$ & $J_2(a) + \lambda\, E_2$ & $2 \times 2$  & 3  & $(2,2,2)$ & $(2,1)$  \\[2pt] 
\hline 
$\mathscrc{R}_{2,2}$ & $Q_2(a,b) + \lambda\, E_2$ & $2 \times 2$  & 3  & $(2,2,2)$ & $(2,2)$  \\[2pt] 
\hline
$\mathscrc{S}_{2,2}$ & $L_2(\lambda)$ & $2 \times 3$  & 3  & $(2,3,2)$ & $(2,2)$  \\[2pt] 
$\mathscrc{S}_{2,1}$ & $L_1(\lambda) \oplus (a + \lambda\, E_1)$ & $2 \times 3$  & 3  & $(2,3,2)$ & $(2,1)$  \\[2pt] 
\hline
$\mathscrc{S}'_{2,2}$ & $L_1(\lambda) \oplus L_1(\lambda)$ & $2 \times 4$  & 4  & $(2,4,2)$ & $(2,2)$  \\[2pt] 
\hline
\end{tabular}
\end{center}
 \caption{Families of canonical forms of $2 \times 2$ and $2 \times 3$ real pencils having no zero minimal indices, with $a_i 
\neq a_j$ for $i \neq j$ and $b \neq 0$.}
 \label{tab:2}
\end{table}

\begin{table}[t!]
\begin{center}
\def\arraystretch{1.6}
\begin{tabular}{c | c |c|c|c|c}
\hline 
\multirow{2}{*}{Family} & \multirow{2}{*}{Canonical form} & $m \times n$ & tensor & multilinear & \multirow{2}{*}{$\rho$} \\[-8pt] 
& & &   rank   &    rank    &   \\
\hline 
$\mathscrc{S}''_{2,2}$ & $L_1(\lambda) \oplus R_1(\lambda)$ & $3 \times 3$ & 4  & $(3,3,2)$ & $(2,2)$ \\[2pt] 
\hline 
$\mathscrc{R}'_{2,2}$ & $a_1 \oplus a_2 \oplus a_3 + \lambda\, E_3$ & $3 \times 3$  & 3  & $(3,3,2)$ & $(2,2)$ \\ 
$\mathscrc{R}'_{2,1}$ & $a_1 \oplus a_2 \oplus a_2 + \lambda\, E_3$ & $3 \times 3$  & 3  & $(3,3,2)$ & $(2,1)$  \\ 
$\mathscrc{R}_{3,0}$ & $a \oplus a \oplus a + \lambda\, E_3$ & $3 \times 3$  & 3  & $(3,3,1)$ & $(3,0)$  \\[2pt] 
\hline 
$\mathscrc{R}_{3,2}$ & $J_3(a) + \lambda\, E_3$ & $3 \times 3$  & 4  & $(3,3,2)$ & $(3,2)$  \\ 
$\mathscrc{R}_{3,1}$ & $a \oplus J_2(a) + \lambda\, E_3$ & $3 \times 3$  & 4  & $(3,3,2)$ & $(3,1)$  \\ 
$\mathscrc{R}''_{2,2}$ & $a_1 \oplus J_2(a_2) + \lambda\, E_3$ & $3 \times 3$  & 4  & $(3,3,2)$ & $(2,2)$  \\[2pt]
\hline 
$\mathscrc{R}'_{3,2}$ & $a \oplus Q_2(c,d) + \lambda\, E_3$ & $3 \times 3$  &  4  & $(3,3,2)$ & $(3,2)$ \\[2pt] 
\hline
$\mathscrc{S}'''_{2,2}$ & $L_1(\lambda) \oplus (a_1 \oplus a_2 + \lambda\, E_2)$ & $3 \times 4$  &  4  & $(3,4,2)$ & $(2,2)$ \\ 
$\mathscrc{S}_{3,1}$ & $L_1(\lambda) \oplus (a \oplus a + \lambda\, E_2)$ & $3 \times 4$  &  4  & $(3,4,2)$ & $(3,1)$ \\ 
$\mathscrc{S}_{3,2}$ & $L_1(\lambda) \oplus (J_2(a) + \lambda\, E_2)$ & $3 \times 4$  &  5  & $(3,4,2)$ & $(3,2)$ \\ 
$\mathscrc{S}_{3,3}$ & $L_1(\lambda) \oplus (Q_2(a,b) + \lambda\, E_2)$ & $3 \times 4$  &  5  & $(3,4,2)$ & $(3,3)$ \\
$\mathscrc{S}'_{3,2}$ & $L_2(\lambda) \oplus (a + \lambda\, E_1)$ & $3 \times 4$  &  4  & $(3,4,2)$ & $(3,2)$ \\ 
$\mathscrc{S}'_{3,3}$ & $L_3(\lambda)$ & $3 \times 4$  &  4  & $(3,4,2)$ & $(3,3)$ \\[2pt] 
\hline
\end{tabular}
\end{center}
 \caption{Families of canonical forms of $3 \times 3$ and $3 \times 4$ real pencils having no zero minimal indices, with $a_i 
\neq a_j$ for $i \neq j$, $b \neq 0$ and $d \neq 0$.}
 \label{tab:3}
\end{table}

\begin{table}[t!]
\begin{center}
\def\arraystretch{1.6}
\begin{tabular}{c | c |c|c|c|c}
\hline 
\multirow{2}{*}{Family} & \multirow{2}{*}{Canonical form} & $m \times n$ & tensor & multilinear & \multirow{2}{*}{$\rho$} \\[-8pt]
& & &  rank   &    rank    &   \\
\hline 
$\mathscrc{S}''_{3,2}$ & $L_1(\lambda) \oplus R_1(\lambda) \oplus (a + \lambda\, E_1)$ & $4 \times 4$ & 5  & $(4,4,2)$ & $(3,2)$  
\\[2pt] 
$\mathscrc{S}''_{3,3}$ & $L_2(\lambda) \oplus R_1(\lambda) $ & $4 \times 4$ & 5  & $(4,4,2)$ & $(3,3)$  \\[2pt] 
\hline
$\mathscrc{R}_{4,0}$ & $a \oplus a \oplus a \oplus a + \lambda\, E_4$ & $4 \times 4$ & 4  & $(4,4,1)$ & $(4,0)$  \\ 
$\mathscrc{R}'''_{2,2}$ & $a_1 \oplus a_1 \oplus a_2 \oplus a_2 + \lambda\, E_4$ & $4 \times 4$ & 4 & $(4,4,2)$ & $(2,2)$  \\ 
$\mathscrc{R}'_{3,1}$ & $a_1 \oplus a_2 \oplus a_2 \oplus a_2 + \lambda\, E_4$ & $4 \times 4$ & 4  &  $(4,4,2)$ & $(3,1)$  \\ 
$\mathscrc{R}''_{3,2}$ & $a_1 \oplus a_2 \oplus a_3 \oplus a_3 + \lambda\, E_4$ & $4 \times 4$ & 4 &   $(4,4,2)$ & $(3,2)$  \\ 
$\mathscrc{R}_{3,3}$ & $a_1 \oplus a_2 \oplus a_3 \oplus a_4 + \lambda\, E_4$ & $4 \times 4$ & 4  & $(4,4,2)$ & $(3,3)$  \\[2pt] 
\hline
$\mathscrc{R}'''_{3,2}$ & $a_1 \oplus a_2 \oplus J_2(a_2) + \lambda\, E_4$ & $4 \times 4$ & 5  & $(4,4,2)$ & $(3,2)$  \\ 
$\mathscrc{R}''''_{3,2}$ & $a_1 \oplus a_1 \oplus J_2(a_2) + \lambda\, E_4$ & $4 \times 4$ & 5  & $(4,4,2)$ & $(3,2)$  \\
$\mathscrc{R}_{4,1}$ & $a \oplus a \oplus J_2(a) + \lambda\, E_4$ & $4 \times 4$ & 5  & $(4,4,2)$ & $(4,1)$  \\ 
$\mathscrc{R}'_{3,3}$ & $J_2(a_1) \oplus J_2(a_2) + \lambda\, E_4$ & $4 \times 4$ & 5  & $(4,4,2)$ & $(3,3)$  \\ 
$\mathscrc{R}''_{3,3}$ & $a_1 \oplus a_2 \oplus J_2(a_3) + \lambda\, E_4$ & $4 \times 4$ & 5  & $(4,4,2)$ & $(3,3)$  \\ 
$\mathscrc{R}'''_{3,3}$ & $a_1 \oplus J_3(a_2) + \lambda\, E_4$ & $4 \times 4$ & 5  & $(4,4,2)$ & $(3,3)$  \\ 
$\mathscrc{R}_{4,2}$ & $a \oplus J_3(a) + \lambda\, E_4$ & $4 \times 4$ & 5  & $(4,4,2)$ & $(4,2)$  \\ 
$\mathscrc{R}'_{4,2}$ & $J_2(a) \oplus J_2(a) + \lambda\, E_4$ & $4 \times 4$ & 6  & $(4,4,2)$ & $(4,2)$  \\ 
$\mathscrc{R}_{4,3}$ & $J_4(a) + \lambda\, E_4$ & $4 \times 4$ & 5  & $(4,4,2)$ & $(4,3)$  \\[2pt] 
\hline
$\mathscrc{R}''''_{3,3}$ & $a_1 \oplus a_2 \oplus Q_2(c,d) + \lambda\, E_4$ & $4 \times 4$ &  5  & $(4,4,2)$ & $(3,3)$  \\ 
$\mathscrc{R}''_{4,2}$ & $a \oplus a \oplus Q_2(c,d) + \lambda\, E_4$ & $4 \times 4$ &  5  & $(4,4,2)$ & $(4,2)$  \\
$\mathscrc{R}'_{4,3}$ & $J_2(a) \oplus Q_2(c,d) + \lambda\, E_4$ & $4 \times 4$ &  5  & $(4,4,2)$ & $(4,3)$  \\[2pt] 
\hline
$\mathscrc{R}_{4,4}$ & $Q_2(a,b) \oplus Q_2(c,d) + \lambda\, E_4$ & $4 \times 4$ &  5  & $(4,4,2)$ & $(4,4)$ \\ 
$\mathscrc{R}'_{4,4}$ & $Q_4(a,b) + \lambda\, E_4$ & $4 \times 4$ &  5  & $(4,4,2)$ & $(4,4)$ \\ 
$\mathscrc{R}''_{4,4}$ & $Q_2(a,b) \oplus Q_2(a,b) + \lambda\, E_4$ & $4 \times 4$ &  6  & $(4,4,2)$ & $(4,4)$ \\[2pt] 
\hline
\end{tabular}
\end{center}
 \caption{Families of canonical forms of $4 \times 4$ real pencils having no zero minimal indices, with $a_i 
\neq a_j$ for $i \neq j$, $b \neq 0$ and $d \neq 0$.}
 \label{tab:4}
\end{table}

\subsection{Minimal ranks of matrix polynomials}
\label{sec:matrix-poly}

One can generalize \Cref{def:min-ranks} to matrix polynomials of finite degree as follows. The minimal ranks of a 
degree-$(d-1)$ polynomial $P(\lambda) = \sum_{k=1}^{d} \lambda^{k-1} A_k$ should correspond to the minimal values $r_1 \ge \dots \ge 
r_d$ such that we can find 
a transformation $T \in \GL_d(\mathbb{F})$ for which $(E,E,T) \cdot P(\lambda) = \sum_{k=1}^{d} \lambda^{k-1} A'_k$ where $\rank A'_k 
= r_k$ for $k=1,\ldots,d$. One can thus introduce the \emph{rank-minimizing subspaces} $\mathcal{T}_1, \ldots, \mathcal{T}_l$ of 
$\mathbb{F}^d$ with respect to $P(\lambda)$, where $1 \le l \le d$, which are inductively defined in the following manner. First, 
let $\mathcal{T}_1$ be the subspace of $\mathbb{F}^d$ spanned by all solutions of 
\begin{equation*}
   \min_{ (t_{1}, \ldots, t_{d}) \neq 0 } \rank \left(  \sum_{k=1}^{d} t_{k} A_k \right) = \bar{r}_1.
\end{equation*} 
If $\dim \mathcal{T}_1 = d$, then $\mathcal{T}_1$ is the only rank-minimizing subspace, that is, $l=1$. Otherwise, we define next 
$\mathcal{T}_2$ as the subspace of $\mathbb{F}^d$ spanned by all solutions of 
\begin{equation*}
   \min_{ (t_{1}, \ldots, t_{d}) \notin \mathcal{T}_1 } \rank \left(  \sum_{k=1}^{d} t_{k} A_k \right) = \bar{r}_2.
\end{equation*} 
If $\dim \mathcal{T}_1 + \dim \mathcal{T}_2 = d$, then we have $\mathcal{T}_1 \oplus \mathcal{T}_2 = \mathbb{F}^2$ and $l=2$. 
Otherwise, one continues in this fashion until $\dim \mathcal{T}_1 + \dots + \dim \mathcal{T}_l = d$ for some $l$, which must happen 
after finitely many steps. So, each $\mathcal{T}_p$ is defined as the subspace spanned by the solutions of 
\begin{equation}
 \label{ms-Tp}
   \min_{ (t_{1}, \ldots, t_{d}) \notin \bar{\mathcal{T}} } 
     \rank \left(  \sum_{k=1}^{d} t_{k} A_k \right) = \bar{r}_p,
     \quad \text{where} \quad
     \bar{\mathcal{T}} = 
     \begin{cases}
       \{ 0 \}, & p = 1, \\
       \mathcal{T}_1 \oplus \dots \oplus \mathcal{T}_{p-1}, & 1 < p \le l.
     \end{cases}
\end{equation} 
It is clear that $\mathcal{T}_1 \oplus \dots \oplus \mathcal{T}_l = \mathbb{F}^d$. The minimal ranks are then associated with this 
decomposition, as defined below.

\begin{defn}
\label{def:min-ranks-poly}
Let $ P(\lambda) = \sum_{k=1}^{d} \lambda^{k-1} A_k$ be an $m \times n$ matrix polynomial over $\mathbb{F}$ of degree (at most) $d-1$, 
and let $\mathcal{T}_1, \ldots, \mathcal{T}_l$ be the rank-minimizing subspaces of $\mathbb{F}^d$ associated with $P(\lambda)$, with 
$\dim \mathcal{T}_p = d_p$.
The minimal ranks of $P(\lambda)$, denoted by $\rho(P)$, are defined as the components of the $d$-tuple 
\begin{equation*}
 \rho(P) = (r_1,\ldots,r_d) \triangleq ( 
            \underbrace{\bar{r}_l,\ldots,\bar{r}_l}_{d_l \text{ times}},
            \underbrace{\bar{r}_{l-1},\ldots,\bar{r}_{l-1}}_{d_{l-1} \text{ times}},
            \ldots,
            \underbrace{\bar{r}_2,\ldots,\bar{r}_2}_{d_2 \text{ times}} 
            \underbrace{\bar{r}_1,\ldots,\bar{r}_1}_{d_1 \text{ times}} 
            ),
\end{equation*}
where the numbers $\bar{r}_p$ are given by \eqref{ms-Tp} and satisfy $\bar{r}_l > \bar{r}_{l-1} > \dots > \bar{r}_1$.
We say that the components of $\rho(P)$ which equal $\bar{r}_p$ are associated with $\mathcal{T}_p$.
\end{defn}

For $d=2$, \Cref{def:min-ranks-poly} is equivalent to \Cref{def:min-ranks}. In particular, $\rho(A,B) = (r,s)$ satisfies $r = s$ if 
and only if $\mathbb{F}^2$ has a single associated rank-minimizing subspace $\mathcal{T}_1 = \mathbb{F}^2$ with respect to $A + 
\lambda B$. An analogue of \Cref{lem:GL2} also clearly holds for matrix polynomials.

Let us introduce 
\begin{equation*}
\label{Bset-poly}
 \mathcal{B}_{r_1,\ldots,r_d} \triangleq 
                   \left\{ P(\lambda) 
                   \; \Bigg| \;
                   \exists \, \sum_{k=1}^{d} \lambda^{k-1} A'_k \in \mathcal{O}(P) 
                   \mbox{ such that }
                   \rank A_k' \le r_k \mbox{ for } k=1,\ldots,d \right\},
\end{equation*}
where $\mathcal{O}(P)$ stands for the $\GL_{m,n,d}$-orbit of $P(\lambda)$.
$\mathcal{B}_{r_1,\ldots,r_d}$ is clearly invariant with respect to a permutation of $r_1,\ldots,r_d$, and thus we shall assume 
$r_1 \ge \dots \ge r_d$. 
Assuming $\rho(P) = (r_1,\ldots,r_d)$, the construction of the rank-minimizing subspaces shows there is a transformation $T \in 
\GL_d(\mathbb{F})$ yielding $(E,E,T) \cdot P(\lambda) = \sum_{k=1}^{d} \lambda^{k-1} A'_k$ such that $\rank A'_k = r_k$,
and thus by definition $P(\lambda) \in \mathcal{B}_{r_1,\ldots,r_d}$. 
Now, since the rows of any other $T' \in \GL_d(\mathbb{F})$ must span $\mathbb{F}^d$, it is easy to see 
that $P(\lambda)$ cannot belong to any  $\mathcal{B}_{r_1,\ldots,r_{q-1},r'_q,r_{q+1},\ldots,r_d}$  such that $r_q > r'_q \ge 
r_{q+1}$. Indeed, this would contradict the construction of the subspaces $\mathcal{T}_p$ as being spanned by \emph{all} solutions of 
the rank minimization problem \eqref{ms-Tp}.
In fact, if $s_1 \ge \dots \ge s_d$ and $P(\lambda) \in \mathcal{B}_{s_1,\ldots,s_d}$, then we must have $s_q \ge r_q$ for all $q \in 
\{1,\ldots,d\}$, because the assumption $r_q > s_q \ge s_{q+1} \ge \dots \ge s_d$ is inconsistent with the above construction 
of the rank-minimizing subspaces $\mathcal{T}_1,\ldots,\mathcal{T}_l$ of $\mathbb{F}^d$ with respect to $P(\lambda)$. This is the 
central argument of the following generalization of \Cref{prop:r-s}.

\begin{prop}
Let $ P(\lambda) = \sum_{k=1}^{d} \lambda^{k-1} A_k$ be an $m \times n$ matrix polynomial over $\mathbb{F}$.
If $\rho(P) = (r_1,\ldots,r_d)$ and $s_1 \ge \dots \ge s_d$, then $P(\lambda) \in \mathcal{B}_{s_1,\ldots,s_d}$ if and only if 
$s_k \ge r_k$ for all $k \in \{1,\ldots,d\}$.
\end{prop}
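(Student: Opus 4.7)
The plan is to handle the two directions separately. For the ``if'' direction, I would invoke \Cref{def:min-ranks-poly} directly: the rank-minimizing subspaces $\mathcal{T}_1, \ldots, \mathcal{T}_l$ decompose $\mathbb{F}^d$, so collecting adapted bases of them into the rows of a matrix $T \in \GL_d(\mathbb{F})$ produces $(E,E,T) \cdot P(\lambda) = \sum_{k=1}^{d} \lambda^{k-1} A'_k$ with $\rank A'_k = r_k$. The hypothesis $s_k \ge r_k$ then certifies $P(\lambda) \in \mathcal{B}_{s_1,\ldots,s_d}$ at once.

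For the converse, I would assume $P(\lambda) \in \mathcal{B}_{s_1,\ldots,s_d}$, so that there exists $T' \in \GL_d(\mathbb{F})$ whose rows $t^{(1)}, \ldots, t^{(d)}$ are linearly independent and yield $A''_k = \sum_{j=1}^d t^{(k)}_j A_j$ with $\rank A''_k \le s_k$. The key structural fact I would extract from the inductive definition \eqref{ms-Tp} is the contrapositive of the minimization property: any $t \notin \mathcal{T}_1 \oplus \dots \oplus \mathcal{T}_{p-1}$ satisfies $\rank\bigl(\sum_j t_j A_j\bigr) \ge \bar{r}_p$, since $\bar{r}_p$ is the minimum taken over that very complement.

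The crux is then a counting argument applied at each level $p \in \{1,\ldots,l\}$. Since $\dim\bigl(\mathcal{T}_1 \oplus \dots \oplus \mathcal{T}_{p-1}\bigr) = d_1 + \dots + d_{p-1}$ and the vectors $t^{(1)}, \ldots, t^{(d)}$ are linearly independent, at most $d_1 + \dots + d_{p-1}$ of them can lie inside $\mathcal{T}_1 \oplus \dots \oplus \mathcal{T}_{p-1}$; hence at least $d_p + d_{p+1} + \dots + d_l$ of them lie outside. For each such index $k$, we get $s_k \ge \rank A''_k \ge \bar{r}_p$. Because the $s_k$ are non-increasing, this forces $s_{d_l + d_{l-1} + \dots + d_p} \ge \bar{r}_p$.

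To conclude, I would match this back to the $r_k$'s using the explicit ordering built into \Cref{def:min-ranks-poly}: for any $k$, the unique $p$ with $r_k = \bar{r}_p$ satisfies $k \le d_l + d_{l-1} + \dots + d_p$, so $s_k \ge s_{d_l + \dots + d_p} \ge \bar{r}_p = r_k$. The main bookkeeping obstacle is simply reconciling the two opposite orderings present in \Cref{def:min-ranks-poly} --- the $r_k$'s are listed in non-increasing order, while the rank-minimizing values are numbered so that $\bar{r}_1 < \dots < \bar{r}_l$ --- which makes it essential to index the $\mathcal{T}_p$ from the top down when transferring the counting bound on $t^{(k)}$'s into a bound on $s_k$. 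No other subtlety arises: the argument is purely linear-algebraic and uses only the invertibility of $T'$ together with the minimality property packaged in \eqref{ms-Tp}.
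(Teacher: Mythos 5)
Your proposal is correct and follows essentially the same route as the paper: the ``if'' direction assembles a transformation $T$ from bases of the rank-minimizing subspaces (chosen among actual minimizers) to realize the minimal ranks, and the ``only if'' direction uses the fact that the rows of any $T'\in\GL_d(\mathbb{F})$ must span $\mathbb{F}^d$, so that for each $p$ at least $d_p+\dots+d_l$ of them escape $\mathcal{T}_1\oplus\dots\oplus\mathcal{T}_{p-1}$ and thus produce matrices of rank at least $\bar r_p$. The paper leaves most of these details implicit (``it is easy to see''), whereas you spell out the counting argument and the index bookkeeping precisely; the underlying idea is identical.
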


In view of these extensions, it makes sense to consider the hierarchy of $m \times n$ matrix polynomials of any finite degree 
in terms of their minimal ranks, leading to a diagram such as that of \Cref{fig:hierarchy}.

\subsection{Decomposing third-order tensors into matrix-vector tensor products of minimal ranks}
\label{sec:MVD}

The connection between $m \times n$ matrix pencils and $m \times n \times 2$ tensors has been exploited time and 
again to derive many results, such as the characterization of $\GL_{m,n,2}(\mathbb{R})$-orbits in terms of their tensor ranks carried 
out by Ja'Ja' and Sumi et al.~\cite{JaJa1979, Sumi2009}. This correspondence is also well-suited to study block-term 
decompositions of $m \times n \times 2$ tensors composed by matrix-vector products, as we can directly associate the ranks of these 
blocks with the ranks of the matrices constituting the pencil.

More generally, a natural connection exists between a block-term decomposition of an $m \times n \times d$ tensor and $m \times n$ 
polynomial of degree $d-1$. Given any third order tensor $X \in \mathbb{F}^{m} \otimes \mathbb{F}^{n} \otimes \mathbb{F}^{d}$, it can 
always be written in the form
\begin{equation*}
  X = \sum_{k=1}^d \left( \sum_{j=1}^{r_k} u^{(k)}_j \otimes v^{(k)}_j \right) \otimes w_k,
\end{equation*} 
where the vectors $w_1, \ldots, w_d$ are linearly independent. Choosing coordinates for these spaces, this expression can be 
identified with
\begin{equation}
 \label{BTD}
  X = \sum_{k=1}^d A_k \otimes w_k \quad \in \mathbb{F}^{m \times n \times d},
\end{equation} 
where $A_k$ is a matrix of rank at most $r_k$. By suitably defining an isomorphism $\mathbb{F}^d \simeq 
\mathbb{F}_{d-1}[\lambda]$, where $\mathbb{F}_{d-1}[\lambda]$ denotes the space of degree-$(d-1)$ polynomials, the tensor in 
\eqref{BTD} can be associated with the matrix polynomial
\begin{equation*}
  P(\lambda) = \sum_{k=1}^d \lambda^{k-1} A_k.
\end{equation*}

From this link, it becomes evident that the minimal ranks of $P(\lambda)$ characterize the intrinsic complexity of the tensor $X$ in 
\eqref{BTD} in terms of the ranks of the matrices appearing in the sum. It thus quantifies the dimensions of the smooth manifolds 
whose join set contains $X$; see the recent work by Breiding,  and Vannieuwenhoven \cite{Breiding2018} for a discussion on this 
interpretation of \eqref{BTD}. In the case of a matrix pencil $P(\lambda) = A + \lambda B$ ($d=2$), our results in 
\Cref{sec:Kronecker} allow one to compute $\rho(A,B)$ from the Kronecker canonical form of the pencil. Similar characterizations for 
higher values of $d$ would be surely valuable for the study of the BTD and its properties.

\section{A pencil may have no best approximation with strictly lower minimal ranks}
\label{sec:ill-posed}

This section investigates the existence of optimal approximations of a pencil $A + \lambda B \in \mathcal{P}_{m,n}(\mathbb{F})$ on 
$\mathcal{B}_{r,s}$, for a given pair $(r,s)$ satisfying $r \ge s$, for $\mathbb{F} = \mathbb{R}$ or $\mathbb{F} = 
\mathbb{C}$. 
Defining such approximations requires a topology for $\mathcal{P}_{m,n}(\mathbb{F})$. We shall pick the norm topology, which is 
the same regardless of the chosen norm, since $\mathcal{P}_{m,n}(\mathbb{F})$ has a finite dimension. It can be introduced by 
considering the inner product
\begin{equation}
 \label{inner}
   \langle A + \lambda B, C + \lambda D \rangle \triangleq \langle A , C  \rangle + \langle B, D \rangle, 
\end{equation}
where the inner product appearing in the right-hand side is the standard Euclidian one given by
\begin{equation*}
   \langle A , C \rangle \triangleq \trace A^* C, 
\end{equation*}
with $A^*$ denoting the adjoint of $A$.  This leads to the Euclidian norm
\begin{equation*}
  \| A + \lambda B \| \triangleq \sqrt{ \langle A + \lambda B , A + \lambda B \rangle  }
                              = \sqrt{ \|A\|^2 + \|B\|^2},
\end{equation*}
which induces the topology. In the above expression, $\|A\| \triangleq \sqrt{\langle A, A \rangle}$ is the Frobenius norm of $A$.
With this definition, the approximation problem can be formulated as
\begin{equation}
 \label{best-app}
  \inf_{ A' + \lambda B' \in \mathcal{B}_{r,s} }  \| A + \lambda B - (A' + \lambda B' ) \|.
\end{equation}
We are thus interested in determining whether this infimum is attained for a given pencil $A + \lambda B$ and some choice of ranks $(r,s)$. 
Evidently, this question is only of interest at all when $A + \lambda B \not\in \mathcal{B}_{r,s}$. To check whether this holds, we 
shall rely on the concept of minimal ranks and its associated results developed in \Cref{sec:pencils}.

In view of the isomorphism between $\mathcal{P}_{m,n}(\mathbb{F})$ and $\mathbb{F}^{m \times n \times 2}$ discussed in \Cref{sec:MVD}, 
the results of the present section apply also to the approximation of tensors from $\mathbb{F}^{m \times n \times 2}$ by a sum 
of two matrix-vector tensor products, in the tensor norm topology.
To define this topology, one can consider the Frobenius norm 
\begin{equation*}
  \| A \otimes e_1 + B \otimes e_2 \| \triangleq \sqrt{\langle A \otimes e_1 + B \otimes e_2 , A \otimes e_1 + B \otimes e_2 \rangle  }
\end{equation*}
where the scalar product is defined for rank-one tensors as
\begin{equation*}
  \langle u \otimes v \otimes w, \, x \otimes y \otimes z  \rangle \triangleq \langle u, x \rangle \cdot \langle v, y \rangle \cdot \langle w, z \rangle
\end{equation*}
and extends bi-linearly to tensors of arbitrary rank. Therefore, 
$$ \| A \otimes e_1 + B \otimes e_2 \| = \sqrt{\|A\|^2 + \|B\|^2} = \| A + \lambda B \|.$$

\subsection{A template for ill-posed pencil approximation problems}
\label{sec:ill-posed-examples}

Examples of tensors having no best rank-$r$ approximation in the norm topology have been known for quite a while; see 
\cite{Comon2014, deSilva2008} and references therein. De Lathauwer \cite{deLathauwer2008c} has employed the same kind of 
construction to provide an example of a tensor having no best approximate block-term decomposition constituted by two blocks of 
multilinear rank $(2,2,2)$. In the next proposition, we resort to a similar expedient to derive a template of ill-posed instances 
of problem \eqref{best-app} of a certain kind.

\begin{prop}
\label{prop:exist-PL}
 Let $A, B$ be $m \times s$ matrices and $C, D$ be $n \times s$ matrices and suppose 
\begin{equation}
\label{cond-ex}
\min\left\{ \, \rank \begin{pmatrix} A & B \end{pmatrix}, \, 
      \rank \begin{pmatrix} C & D \end{pmatrix} \,\right\} > r =  \frac{3s}{2} .  
\end{equation} 
 Then, the pencil 
 \begin{equation}
   P(\lambda) = \left( A C^\T + B D^\T \right) + \lambda \, B C^\T \quad \in \mathcal{P}_{m,n}(\mathbb{F})
 \end{equation}
has no best approximation in $\mathcal{B}_{s,s}$.
\end{prop}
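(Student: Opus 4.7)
The plan is to establish two complementary facts: (i) $P(\lambda) \in \overline{\mathcal{B}_{s,s}}$, so the infimum in \eqref{best-app} equals zero; and (ii) $P(\lambda) \notin \mathcal{B}_{s,s}$, so this infimum is not attained. Together they imply that no best approximation exists.

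For (i), I will exhibit an explicit approximating sequence built by the De Silva--Lim style scaling trick. Picking a sequence of nonzero scalars $\epsilon_n \to 0$, I will set
\begin{equation*}
X_n := (A + \epsilon_n^{-1} B)(C + \epsilon_n D)^\T, \qquad Y_n := B C^\T,
\end{equation*}
which both have rank at most $s$ by construction. Expanding gives $X_n = A C^\T + B D^\T + \epsilon_n AD^\T + \epsilon_n^{-1} BC^\T$. With $T_n = \left(\begin{smallmatrix} 1 & -\epsilon_n^{-1} \\ 0 & 1 \end{smallmatrix}\right) \in \GL_2(\mathbb{F})$, the pencil
\begin{equation*}
P_n(\lambda) := (E,E,T_n) \cdot (X_n + \lambda Y_n) = (X_n - \epsilon_n^{-1} Y_n) + \lambda Y_n = AC^\T + BD^\T + \epsilon_n AD^\T + \lambda BC^\T
\end{equation*}
belongs to $\mathcal{B}_{s,s}$ by \Cref{lem:GL2}, and clearly $P_n(\lambda) \to P(\lambda)$ in norm.

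For (ii), by \Cref{lem:GL2} it suffices to show that no two linearly independent vectors $(t_1,t_2)\in \mathbb{F}^2$ simultaneously satisfy $\rank\bigl(t_1(AC^\T + BD^\T) + t_2 BC^\T\bigr) \le s$. The key identity is
\begin{equation*}
t_1 (AC^\T + BD^\T) + t_2 BC^\T = \begin{pmatrix} t_1 A & B \end{pmatrix}\begin{pmatrix} C^\T \\ (t_1 D + t_2 C)^\T \end{pmatrix}.
\end{equation*}
When $t_1 \neq 0$, the column-spans of the two outer factors coincide with those of $[A\ B]$ and $[C\ D]$ respectively, whose ranks both exceed $3s/2$ by \eqref{cond-ex}. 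Sylvester's rank inequality with inner dimension $2s$ then yields $\rank \ge \rank[A\ B] + \rank[C\ D] - 2s > 3s/2 + 3s/2 - 2s = s$. So only vectors with $t_1 = 0$ can give rank $\le s$, and these span only a one-dimensional subspace, precluding the required pair.

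The main subtle point I anticipate is the Sylvester-inequality estimate in step (ii): the threshold $3s/2$ appearing in \eqref{cond-ex} is chosen precisely so that $\rank[A,B]+\rank[C,D]-2s>s$ holds, and recognizing the factorization of $t_1(AC^\T+BD^\T)+t_2BC^\T$ as an $m\times 2s$ times $2s\times n$ product is what makes Sylvester applicable. Once that identity is in hand, the remaining steps (rank bounds on $X_n$ and $Y_n$, the $\GL_2$ equivalence, and the norm convergence) are direct computations.
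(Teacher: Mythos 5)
Your proposal is correct and takes essentially the same two-step approach as the paper: first $P(\lambda)\notin\mathcal{B}_{s,s}$ via a factorization of $t_1(AC^\T+BD^\T)+t_2BC^\T$ into an $m\times 2s$ times $2s\times n$ product followed by Sylvester's inequality, then a De Silva--Lim style diverging-scalar construction showing the infimum is zero. Your factorization $[\,t_1A\ B\,]\bigl(\begin{smallmatrix}C^\T\\(t_1D+t_2C)^\T\end{smallmatrix}\bigr)$ and your choice of $X_n,Y_n$ differ cosmetically from the paper's $F_i\,[\,C\ D\,]^\T$ and $n(B+\tfrac1nA)(C+\tfrac1nD)^\T(1+\tfrac1n\lambda)-nBC^\T$, but both achieve the same rank bounds and the same limit, so the arguments are equivalent.
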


\begin{proof}
First, let us show that for any transformation $T \in \GL_2(\mathbb{F})$, at least one of the matrices $t_{11} (A C^\T + B D^\T) + 
t_{12} B C^\T$ and $t_{21} (A C^\T + B D^\T) + t_{22} B C^\T$ has rank strictly larger than $s$. 
Note that we can write  $t_{i1} (A C^\T + B D^\T) + t_{i2} B C^\T = F_i \begin{pmatrix} C & D \end{pmatrix}^\T$, where
$$ F_i = \begin{pmatrix} A & B \end{pmatrix} \begin{pmatrix} t_{i1} E_s & 0 \\ t_{i2} E_s & t_{i1} E_s \end{pmatrix}. $$
We have $\rank F_i = \rank \begin{pmatrix} A & B \end{pmatrix} > r$ for $t_{i1} \neq 0$, and so Sylvester's inequality implies that 
the product $F_i \begin{pmatrix} C & D \end{pmatrix}^\T$ has rank strictly greater than $2(r - s) = s$.
Because $t_{11}$ and $t_{21}$ obviously cannot be both zero, the statement is true. Hence, we 
conclude that $P(\lambda) \notin \mathcal{B}_{s,s}$.
Next, let
 \begin{align}
 \label{Pn}
P_n(\lambda) \triangleq & \ n 
  \left[ \left( B + \frac{1}{n} A \right)\left( C + \frac{1}{n} D \right)^\T  \right] 
  \left( 1 + \frac{1}{n} \lambda \right) - n \left( B C^\T \right)  \\
    = & \ P(\lambda) + \frac{1}{n}\left[  \left( A C^\T \right) \lambda +
                 \left( B D^\T \right) \lambda +
                 \left( A D^\T \right) \right] 
                 + \frac{1}{n^2} \left( A D^\T \right) \lambda.  
 \label{Pn2}
 \end{align}
By construction, $P_n(\lambda) \in \mathcal{B}_{s,s}$, while \eqref{Pn2} reveals that $P_n(\lambda) \rightarrow P(\lambda) $ as 
$n \rightarrow \infty$. Hence, since $P(\lambda) \notin \mathcal{B}_{s,s}$, it holds that 
\begin{equation*}
 \inf_{A' + \lambda B' \in \mathcal{B}_{s,s}} \, 
    \| P(\lambda) - (A' + \lambda B') \| = 0
\end{equation*} 
is not attained.
\end{proof}

\begin{rem}
 The condition \eqref{cond-ex} is tight in the sense that one can find matrices $A$, $B$, $C$ and $D$ satisfying $\rank 
\begin{pmatrix} A & B\end{pmatrix} \le r$ and $\rank \begin{pmatrix} C & D \end{pmatrix} \le r$ such that 
 $P(\lambda) = \left( A C^\T + B D^\T \right) + \lambda \, B C^\T \in \mathcal{B}_{s,s}$. Take, for instance, $m, n \ge 6$, $s=4$ and
\begin{align*}
   A = \begin{pmatrix} a_1 & a_2 & a_3 & a_4 \end{pmatrix}, \quad & \quad
   B = \begin{pmatrix} a_1 & a_2 & b_3 & b_4 \end{pmatrix}, \\
   C = \begin{pmatrix} c_1 & c_2 & c_3 & c_4 \end{pmatrix}, \quad & \quad
   D = \begin{pmatrix} d_1 & d_2 & c_3 & c_4 \end{pmatrix},
\end{align*}
where $a_1, a_2, a_3, a_4, b_3, b_4$ are linearly independent, and the same applies to $c_1, c_2, c_3, c_4, d_1, d_2$. We have then
$\rank \begin{pmatrix} A & B \end{pmatrix} = \rank \begin{pmatrix} C & D \end{pmatrix} = 6 = \frac{3s}{2} = r$. Choosing $t_{i1} = 1$ 
and $t_{i2} = -1$ yields $F_i \begin{pmatrix} C & D \end{pmatrix}^\T  = (a_3 - b_3) c_3^\T + (a_4 - b_4) c_4^\T + B D^\T $, which 
clearly cannot have rank larger than $s = 4$.
\end{rem}

\subsection{Ill-posedness over a positive-volume set of real pencils}
\label{sec:positive-volume}

In this section, we will prove that no (regular) $2k \times 2k$ pencil having only complex-valued eigenvalues admits a best 
approximation in $\mathcal{B}_{2k-1,2k-1}$ in the norm topology, for any positive integer $k$. The set containing all such pencils is 
defined as 
\begin{align}
 \label{C-set}
  \mathcal{C} \triangleq & \ \{ A + \lambda B \in \mathcal{P}_{2k,2k}(\mathbb{R}) \ | \ \mathcal{O}(A,B) \text{ contains } Q + \lambda 
E, 
   \text{ where } Q  \text{ has no real eigenvalues }\} \\
   = & \ \{ A + \lambda B \in \mathcal{P}_{2k,2k}(\mathbb{R}) \ | \ \rho(A,B) = (2k,2k)\},  \nonumber
\end{align} 
where the equality is due to \Cref{cor:full-min-rank}. 
For instance, in the case $k=2$ this set is constituted by all orbits of the families $\mathscrc{R}_{4,4}$, $\mathscrc{R}'_{4,4}$ and 
$\mathscrc{R}''_{4,4}$ of \Cref{tab:4}. We start by showing $\mathcal{C}$ is open, and therefore has positive volume (since 
it is always nonempty).

\begin{lem}
\label{lem:C-open}
The set $\mathcal{C} \subset \mathcal{P}_{2k,2k}(\mathbb{R})$ defined by \eqref{C-set} is open in the norm topology.
\end{lem}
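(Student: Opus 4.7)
The plan is to reduce openness of $\mathcal{C}$ to the non-vanishing of a continuous determinantal function on a compact set, by exploiting the equivalent characterization of $\mathcal{C}$ obtained from Corollary~\ref{cor:full-min-rank}.

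First, I would recast the defining condition of $\mathcal{C}$ analytically. By Corollary~\ref{cor:full-min-rank}, $A + \lambda B \in \mathcal{C}$ if and only if the pencil is regular and none of its elementary divisors is a power of a real linear form. Over $\mathbb{R}$, every irreducible homogeneous polynomial in two variables is either linear or an irreducible quadratic, so this condition is equivalent to requiring that
\begin{equation*}
  \chi_{A,B}(t,u) \; \triangleq \; \det(tA + uB)
\end{equation*}
is a non-zero homogeneous polynomial of degree $2k$ having no zero on $\mathbb{R}^2 \setminus \{0\}$. Indeed, any real zero $(t_0,u_0)$ would contribute a linear factor to $\chi_{A,B}$, yielding a linear elementary divisor; conversely, absence of real zeros forces $\chi_{A,B}$ to split over $\mathbb{R}$ into powers of irreducible quadratics, ruling out linear elementary divisors.

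Second, I would fix $A_0 + \lambda B_0 \in \mathcal{C}$ and restrict $(t,u)$ to the compact unit circle $S^1 = \{(t,u) \in \mathbb{R}^2 : t^2 + u^2 = 1\}$. By the characterization above and homogeneity, $\chi_{A_0,B_0}$ is continuous and strictly non-zero on $S^1$, so
\begin{equation*}
  \delta \; \triangleq \; \min_{(t,u) \in S^1} |\chi_{A_0,B_0}(t,u)| \; > \; 0.
\end{equation*}
Since $\chi_{A,B}(t,u)$ depends polynomially on the entries of $A$ and $B$ and on $(t,u)$, the map $(A,B,t,u) \mapsto \chi_{A,B}(t,u)$ is jointly continuous. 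Compactness of $S^1$ then yields $\varepsilon > 0$ such that $\|(A+\lambda B)-(A_0+\lambda B_0)\| < \varepsilon$ implies $|\chi_{A,B}(t,u) - \chi_{A_0,B_0}(t,u)| < \delta/2$ uniformly in $(t,u) \in S^1$, hence $|\chi_{A,B}(t,u)| > \delta/2$ on $S^1$. By homogeneity, $\chi_{A,B}$ is non-vanishing on all of $\mathbb{R}^2 \setminus \{0\}$, so $A + \lambda B \in \mathcal{C}$. This produces an open norm-ball around $A_0 + \lambda B_0$ contained in $\mathcal{C}$, proving openness.

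No substantive obstacle is anticipated: the only subtle point is the translation of Corollary~\ref{cor:full-min-rank} into the non-vanishing statement for $\chi_{A,B}$ on $\mathbb{R}^2 \setminus \{0\}$, which hinges on the classification of real irreducible homogeneous binary forms as linear or quadratic. Once that characterization is in hand, the remainder is a standard continuity-plus-compactness argument.
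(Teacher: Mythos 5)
Your proof is correct, and it takes a genuinely different route from the paper's. The paper proves openness by working with the Kronecker canonical form: it writes a pencil in $\mathcal{C}$ as $(P,U,T)\cdot(Q+\lambda E)$, bounds the perturbation of $Q+\lambda E$ via the smallest singular value of the linear operator $(P,U,T)$ on $\mathcal{P}_{2k,2k}(\mathbb{R})$, and then appeals to continuity of the eigenvalues of a regular pencil to conclude that nearby pencils stay in $\mathcal{C}$. You instead reformulate membership in $\mathcal{C}$ as non-vanishing of the homogeneous determinant $\chi_{A,B}(t,u)=\det(tA+uB)$ on $\mathbb{R}^2\setminus\{0\}$ and run a compactness-on-$S^1$ argument. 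This is precisely the ``absolutely nonsingular'' characterization that the paper states (without proof) in the remark following \Cref{cor:Y-bestapp}, attributing it to Sumi et al.\ \cite{Sumi2013}; the paper explicitly presents its own proof of \Cref{lem:C-open} as an \emph{alternative} to that approach. So your argument is the one the paper deliberately avoided in order to give a self-contained treatment in the canonical-form framework. Your route is arguably more elementary (no need for the singular-value bound or a careful continuity-of-eigenvalues argument on $\mathbb{CP}^1$), while the paper's route has the virtue of staying entirely within the machinery already developed in \Cref{sec:Kronecker}. One small simplification to your write-up: you do not actually need \Cref{cor:full-min-rank} or the classification of real irreducible binary forms to get the determinantal characterization. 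From \Cref{def:min-ranks}, $\rho(A,B)=(2k,2k)$ holds if and only if $s=\min_{(t,u)\neq 0}\rank(tA+uB)=2k$ (since then $r$ is automatically $2k$), which is literally the statement that $tA+uB$ is nonsingular for all $(t,u)\neq 0$; the elementary-divisor detour is correct but unnecessary.
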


\begin{proof}
Take an arbitrary pencil $A + \lambda B \in \mathcal{C}$. By definition, it can be written as $(P,U,T) \cdot (Q + \lambda E)$,
where $(P,U,T) \in \GL_{2k,2k,2}(\mathbb{R})$ and 
$Q = Q_{2 k_1}(a_1,b_1) \oplus Q_{2 k_2}(a_2,b_2) \oplus \dots \oplus Q_{2 k_l}(a_l,b_l)$, with $k_1 + \dots + k_l = k$ and 
$b_i \neq 0$ for $i=1,\ldots,l$.
Consider any other pencil $A' + \lambda B'$ lying in an open ball of radius $\sqrt{\epsilon}$ centered on $A + \lambda B$. We 
have
\begin{equation*}
  \left\| (P,U,T) \cdot (Q + \lambda E) - (A' + \lambda B') \right\|^2
= \left\| (P,U,T) \cdot \left(Q - A'' + \lambda(E - B'') \right) \right\|^2 < \epsilon,
\end{equation*} 
where $A'' + \lambda B'' = (P^{-1}, U^{-1}, T^{-1}) \cdot (A' + \lambda B') \not\equiv Q + \lambda E$. But, since $(P,U,T) \in 
\GL_{2k,2k,2}(\mathbb{R})$, then we have
$
  \| (P,U,T) \cdot (C + \lambda D) \| \ge \sigma \|C + \lambda D\|
$ 
for any $C + \lambda D \in \mathcal{P}_{2k,2k}(\mathbb{R})$, where $\sigma > 0$ is the smallest singular value of the linear 
operator $(P,U,T) : \mathcal{P}_{2k,2k}(\mathbb{R}) \rightarrow \mathcal{P}_{2k,2k}(\mathbb{R})$. So, 
\begin{equation*}
    \left\| Q - A'' \right\|^2 + \left\| E - B'' \right\|^2 
\le \sigma^{-2} \left\| (P,U,T) \cdot \left(Q - A'' + \lambda(E - B'') \right) \right\|^2
 <  \sigma^{-2} \, \epsilon.
\end{equation*}
Hence, a sufficiently small $\epsilon$ can be chosen to guarantee that $\|Q - A''\| \le \epsilon_1$ and $\| E - B'' \| \le \epsilon_2$ 
for any $\epsilon_1, \epsilon_2 > 0$. By continuity of the eigenvalues of the pencil $Q + \lambda E$, it follows that there 
exists $\epsilon > 0$ such that every such $A'' + \lambda B''$ can be written as $A'' + \lambda B'' = \left( X, Y, Z \right) \cdot 
(Q' + \lambda E)$ for some $(X,Y,Z) \in \GL_{2k,2k,2}(\mathbb{R})$ and $Q'$ having $k$ pairs of complex conjugate eigenvalues 
(with possibly some identical pairs). Therefore, $A' + \lambda B' = \left( PX, UY, TZ \right) \cdot (Q' + \lambda E) \in 
\mathcal{C}$. Because this applies to every $A' + \lambda B'$ in the chosen open ball of radius $\epsilon$, then $A + \lambda B$ is an 
interior point of $\mathcal{C}$.
\end{proof}

\begin{corol}
\label{cor:Y-bestapp}
 The set $\mathcal{B}_{2k,2k-1} \subset \mathcal{P}_{2k,2k}(\mathbb{R})$ is closed in the norm topology. Consequently, 
 $$
  \inf_{A' + \lambda B' \in \mathcal{B}_{2k,2k-1}} \| A + \lambda B - (A' + \lambda B') \|
 $$
 is always attained by some $A' + \lambda B' \in \mathcal{B}_{2k,2k-1}$.
\end{corol}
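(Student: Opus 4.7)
The plan is to show that $\mathcal{B}_{2k,2k-1}$ is precisely the complement of $\mathcal{C}$ inside $\mathcal{P}_{2k,2k}(\mathbb{R})$, and then invoke \Cref{lem:C-open}.

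First I would identify the set $\mathcal{B}_{2k,2k-1}$ via minimal ranks. Let $A+\lambda B \in \mathcal{P}_{2k,2k}(\mathbb{R})$ with $\rho(A,B) = (r,s)$. By \Cref{prop:r-s}, $A+\lambda B \in \mathcal{B}_{2k,2k-1}$ if and only if $r \le 2k$ and $s \le 2k-1$. Since $A$ and $B$ are $2k \times 2k$ matrices, the condition $r \le 2k$ is automatic, so membership reduces to $s \le 2k-1$, i.e.~$s \ne 2k$. Equivalently, $A+\lambda B \notin \mathcal{B}_{2k,2k-1}$ iff $s = 2k$, which (using $r \ge s$) forces $\rho(A,B) = (2k,2k)$. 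Thus
\begin{equation*}
   \mathcal{P}_{2k,2k}(\mathbb{R}) \setminus \mathcal{B}_{2k,2k-1} \;=\; \mathcal{C},
\end{equation*}
with $\mathcal{C}$ as defined in \eqref{C-set}.

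Next I would apply \Cref{lem:C-open}, which asserts that $\mathcal{C}$ is open in the norm topology. Its complement in $\mathcal{P}_{2k,2k}(\mathbb{R})$ — which is exactly $\mathcal{B}_{2k,2k-1}$ — is therefore closed, proving the first claim.

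For the existence of a minimizer, I would use a standard compactness argument available in the finite-dimensional space $\mathcal{P}_{2k,2k}(\mathbb{R})$. The distance function $f(X) \triangleq \|A+\lambda B - X\|$ is continuous, and the set $\mathcal{B}_{2k,2k-1}$ is nonempty (it contains, e.g., the zero pencil). Choosing any $X_0 \in \mathcal{B}_{2k,2k-1}$ and setting $R \triangleq f(X_0)$, the minimization may be restricted to $\mathcal{B}_{2k,2k-1} \cap \overline{B}(A+\lambda B, R)$, which is the intersection of a closed set with a closed ball in a finite-dimensional normed space, hence compact. A continuous function on a nonempty compact set attains its infimum, yielding the desired minimizer.

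No real obstacle is expected; the whole argument is essentially the observation that the minimal-rank stratification gives $\mathcal{B}_{2k,2k-1}$ as the complement of $\mathcal{C}$, after which both claims follow from openness of $\mathcal{C}$ and finite-dimensional compactness.
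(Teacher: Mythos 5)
Your proposal is correct and takes essentially the same route as the paper: the paper's entire proof is the observation that $\mathcal{B}_{2k,2k-1} = \mathcal{P}_{2k,2k}(\mathbb{R}) \setminus \mathcal{C}$ together with \Cref{lem:C-open}. You simply spell out the details the paper leaves implicit — the identification of the complement via \Cref{prop:r-s} and the standard finite-dimensional compactness argument for attainment of the infimum — both of which are sound.
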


\begin{proof}
Follows from \Cref{lem:C-open} by using the fact that $\mathcal{B}_{2k,2k-1} = \mathcal{P}_{2k,2k}(\mathbb{R}) \setminus 
\mathcal{C}$.
\end{proof}

Another way to define $\mathcal{C}$ is by stating that it contains all $2k \times 2k$ pencils $A + \lambda B$ such that $\det(u A + t 
B) = 0$ if and only if $t=u=0$. This corresponds to the set of absolutely nonsingular $2k \times 2k \times 2$ tensors defined by Sumi 
et al.~in their paper \cite{Sumi2013}. It was shown in \cite[Theorem 2.5]{Sumi2013} that, for any positive integer $k$ and $d > 1$, 
the set of absolutely nonsingular $2k \times 2k \times d$ tensors is open. \Cref{lem:C-open} therefore provides an alternative proof 
of that fact for the case $d = 2$.

\begin{theo}
\label{theo:closure}
 The set $\mathcal{B}_{2k-1,2k-1} \subset \mathcal{P}_{2k,2k}(\mathbb{R})$ is not closed in the norm topology. Furthermore, its 
closure is given by $ \overline{\mathcal{B}_{2k-1,2k-1}} =  \mathcal{B}_{2k,2k-1} \subset \mathcal{P}_{2k,2k}(\mathbb{R})$.
\end{theo}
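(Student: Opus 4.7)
The plan is to prove $\overline{\mathcal{B}_{2k-1,2k-1}} = \mathcal{B}_{2k,2k-1}$ via the two inclusions, and derive non-closedness as a corollary. The easy direction $\overline{\mathcal{B}_{2k-1,2k-1}} \subseteq \mathcal{B}_{2k,2k-1}$ will follow immediately from the chain $\mathcal{B}_{2k-1,2k-1} \subseteq \mathcal{B}_{2k,2k-1}$ (monotonicity of the $\mathcal{B}_{r,s}$) combined with the closedness of $\mathcal{B}_{2k,2k-1}$ provided by \Cref{cor:Y-bestapp}. The substantive work is the reverse inclusion.

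For that inclusion, I will take an arbitrary $A + \lambda B \in \mathcal{B}_{2k,2k-1}$ and, using $\GL_{2k,2k,2}(\mathbb{R})$-invariance together with continuity of the group action, reduce the problem to approximating a Kronecker canonical representative. Singular pencils are dispatched immediately: the remark after \Cref{theo:rho-canonical} forces both minimal ranks below $2k$, so such pencils already lie in $\mathcal{B}_{2k-1,2k-1}$. For regular pencils I may assume the representative has no infinite elementary divisors, so it reads
\begin{equation*}
 C(\lambda) = J_{m_1}(a_1) \oplus \cdots \oplus J_{m_p}(a_p) \oplus Q + \lambda E_{2k},
\end{equation*}
with $a_l \in \mathbb{R}$ and $Q$ carrying only complex conjugate eigenvalue pairs. \Cref{lem:rho-regular} (with $j = 0$) then splits the analysis according to the number of distinct real eigenvalues of $C$: two or more distinct real eigenvalues already give $k_r \geq 1$ and hence $\rho(C) \leq (2k-1,2k-1)$; zero real eigenvalues would place $C$ in $\mathcal{C}$, which is disjoint from $\mathcal{B}_{2k,2k-1}$; leaving only the residual case of exactly one distinct real eigenvalue $a$, whose algebraic multiplicity is necessarily even because $Q$ has even size.

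In that residual case I will approximate $C(\lambda)$ by perturbing the Jordan structure at $a$ so as to manufacture a second distinct real eigenvalue. If at least two Jordan blocks sit at $a$, I will shift one by $\epsilon$, replacing $J_{m_2}(a)$ by $J_{m_2}(a + \epsilon)$; if a single Jordan block $J_m(a)$ is present, the parity observation forces $m \geq 2$ and I will use the upper-triangular perturbation $J_m(a) + \epsilon \, \mathrm{diag}(0,1,\ldots,m-1)$, whose $m$ distinct diagonal entries are its eigenvalues. In either sub-case the perturbed pencil has at least two distinct real eigenvalues, so another application of \Cref{lem:rho-regular} places it in $\mathcal{B}_{2k-1,2k-1}$; letting $\epsilon \to 0$ and pulling back by the (bounded) tensor-equivalence transformation yields the desired approximating sequence for $A + \lambda B$. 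Non-closedness will then follow by exhibiting a concrete element of $\mathcal{B}_{2k,2k-1} \setminus \mathcal{B}_{2k-1,2k-1}$ reached by the construction, for instance $J_{2k}(a) + \lambda E_{2k}$, whose $\rho = (2k, 2k-1)$ forbids membership in $\mathcal{B}_{2k-1,2k-1}$ by \Cref{prop:r-s}. The main obstacle I anticipate is precisely this residual single-real-eigenvalue case: without the parity observation one might worry that no small perturbation produces a second real eigenvalue (perturbing $Q$ alone cannot, since its discriminant is bounded away from zero), and the parity constraint is what guarantees a splitting deformation is always available via either block-shifting or a diagonal perturbation of the lone Jordan block.
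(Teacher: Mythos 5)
Your plan is sound and rests on the same essential idea as the paper's own proof: reduce via $\GL_{2k,2k,2}(\mathbb{R})$-invariance and continuity to a Kronecker canonical representative, then split a repeated real eigenvalue by perturbing the Jordan structure and invoke \Cref{lem:rho-regular} to certify the perturbed pencil lies in $\mathcal{B}_{2k-1,2k-1}$. Two points of comparison are worth recording. First, your argument is somewhat more thorough than the paper's: the paper declares it sufficient to approximate only pencils with $\rho(A,B)=(2k,2k-1)$, but $\mathcal{B}_{2k,2k-1}\setminus\mathcal{B}_{2k-1,2k-1}$ also contains pencils with $\rho=(2k,s)$ for $s<2k-1$ (e.g.\ $E_{2k}+\lambda\,0$ with $\rho=(2k,0)$, or $J_m(a)\oplus J_{2k-m}(a)+\lambda E$ with $\rho=(2k,2k-2)$), and those are exactly the ``at least two blocks at $a$'' and ``single block with extra $Q$'' residual sub-cases you handle directly by shifting a block or applying the $\epsilon\,\mathrm{diag}(0,\ldots,m-1)$ perturbation. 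The paper's reduction to $\rho=(2k,2k-1)$ can be repaired (those remaining pencils are in turn limits of $\rho=(2k,2k-1)$ pencils, and closure is idempotent), but your treatment makes the inclusion $\mathcal{B}_{2k,2k-1}\subseteq\overline{\mathcal{B}_{2k-1,2k-1}}$ transparent at one stroke. Second, the perturbations differ cosmetically: the paper nudges a single diagonal entry of $J_{2k}(a)$ to $a+1/p$, yielding a pencil whose canonical form is $J_{2k-1}(a)\oplus(a+1/p)+\lambda E$, whereas you split either a whole block (shifting $J_{m_2}(a)\mapsto J_{m_2}(a+\epsilon)$) or all diagonal entries of a lone Jordan block; both choices produce a second distinct real eigenvalue, which is all \Cref{lem:rho-regular} requires. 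Your parity observation ($2k$ and $|Q|$ both even, so the real part has even size $\ge 2$) correctly rules out the only potentially problematic configuration, a lone $1\times1$ Jordan block with nothing to split.
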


\begin{proof}
To prove this claim, it is sufficient\footnote{Indeed, if $\mathcal{X} \subset \mathcal{Y}$, $\mathcal{Y} = 
\overline{\mathcal{Y}}$ and $\mathcal{Y} \subseteq \overline{\mathcal{X}}$, then $\overline{\mathcal{X}} \subseteq 
\overline{\mathcal{Y}}$ and thus $\mathcal{Y} \subseteq \overline{\mathcal{X}} \subseteq \overline{\mathcal{Y}} = \mathcal{Y}$, 
implying $\overline{\mathcal{X}} = \mathcal{Y}$.} to show that for every pencil $A + \lambda B \in \mathcal{B}_{2k,2k-1} \subset 
\mathcal{P}_{2k,2k}(\mathbb{R})$ having minimal ranks $\rho(A,B) = (2k,2k-1)$, we can find a sequence of pencils in 
$\mathcal{B}_{2k-1,2k-1}$ which converges to $A + \lambda B$. 
First, recall that every pencil $A + \lambda B$ with $\rho(A,B) = (2k,2k-1)$ must be regular.
From \Cref{lem:rho-regular}, $\rho(A,B) = (2k,2k-1)$ holds precisely when the canonical form of $A + \lambda B$ comprises 
either a single Jordan block $J_{m_1}(b)$ for some $b \in \mathbb{R}$, or a single block $N_{m_1}(\lambda)$ having an infinite 
elementary divisor, with even dimension $m_1 > 0$. Let us first focus on the case with no infinite elementary divisors, where the 
canonical form is either of the form $J_{2k}(a) + \lambda E$ (with $m_1 = 2k$) or of the form $J_{m_1}(a) \oplus Q + \lambda 
E$, where the elementary divisors of $Q + \lambda E$ cannot be factored into powers of linear forms. The latter possibility 
exists of course only when $k > 1$. Let us consider each case separately:
\begin{enumerate}[(i)]

 \item In the former case, define $Z_p(\lambda) \triangleq W_p + \lambda E$, where 
 \begin{equation*}
W_p \triangleq J_{2k-1}(a) \oplus \left(a + \frac{1}{p}\right) + e_{2k-1} \otimes e_{2k} = 
\begin{pmatrix}
a & 1 &  &  &  &  \\
 & a & 1 &  & &  \\
  &   & \ddots &  \ddots &   & \\
 &  &  & a & 1 &  \\
 &  & &  & a & 1 \\  
 &  & &  &  & a + 1/p \\
\end{pmatrix},
\end{equation*}
where $p$ is a positive integer. It is not hard to see that $Z_p(\lambda)$ is $\GL_{2k,2k,2}(\mathbb{R})$-equivalent to 
$J_{2k-1}(a) \oplus (a + 1/p) + \lambda E$, since their elementary divisors coincide.
Hence, by \Cref{lem:rho-regular}, $Z_p(\lambda)$ has minimal ranks $(2k-1,2k-1)$, and thus belongs to $\mathcal{B}_{2k-1,2k-1}$. 
On the other hand, $A + \lambda B = \left( P, U, T  \right) \cdot (J_{2k}(a) + \lambda E)$ for some $\left( P, U, T  \right) 
\in \GL_{2k,2k,2}(\mathbb{R})$, and so we have
\begin{equation*}
   \lim_{p \rightarrow \infty}
    \left\| A + \lambda B - \left( P, U, T  \right) \cdot Z_p(\lambda) \right\|
 = \lim_{p \rightarrow \infty}
     \left\| \left( P, U, T  \right) \cdot \left( J_{2k}(a) - W_p \right) \right\| = 0.
\end{equation*} 
It follows that the sequence of pencils $\{ \left( P, U, T  \right) \cdot Z_p(\lambda)\}_{p \in \mathbb{N}}$ converges to $A + \lambda 
B$ as $p \rightarrow \infty$.

\item The same argument can be employed in the second case, now with $Z'_p(\lambda) \triangleq W'_p + \lambda E$, where
$$ W'_p \triangleq J_{m_1 - 1}(a) \oplus (a + 1/p) \oplus Q + e_{m_1 - 1} \otimes e_{m_1}.$$
From this definition, $Z_p(\lambda)$ is $\GL_{2k,2k,2}(\mathbb{R})$-equivalent to $J_{m_1-1}(a) \oplus (a + 1/p) \oplus 
Q + \lambda E$, which is also in $\mathcal{B}_{2k-1,2k-1}$. Writing again $A + \lambda B = \left( P, U, T  \right) \cdot (J_{m_1}(a) 
\oplus Q + \lambda E)$, it follows that
\begin{equation*}
   \lim_{p \rightarrow \infty}
     \left\| A + \lambda B - \left( P, U, T  \right) \cdot Z'_p(\lambda) \right\|
 = \lim_{p \rightarrow \infty}
     \left\| \left( P, U, T  \right) \cdot \left( J_{m_1}(a) \oplus Q - W'_p \right) \right\| = 0.
\end{equation*}
\end{enumerate}
Finally, note that the above argument extends easily to the case where $A + \lambda B$ has one infinite divisor, because there 
still exists $(P,U,T) \in \GL_{m,n,2}(\mathbb{R})$ such that either $A + \lambda B = \left( P, U, T  \right) \cdot (J_{2k}(a) + \lambda 
E)$ or $A + \lambda B = \left( P, U, T  \right) \cdot (J_{m_1}(a) \oplus Q + \lambda E)$, since infinite elementary divisors can always 
be avoided with a $\GL_{2}(\mathbb{R})$ transformation.
\end{proof}

Motivated by the fact that pencils with minimal ranks $(2k,2k-1)$ can be arbitrarily well approximated by pencils having minimal ranks 
$(2k-1,2k-1)$, one may define the ``border minimal ranks'' of a pencil in the same fashion as the border rank is defined for tensors 
(see, e.g., \cite{deSilva2008}). Specifically, \Cref{theo:closure} shows that every real $2k \times 2k$ pencil with minimal ranks 
$(2k,2k-1)$ has ``border minimal ranks'' $(2k-1,2k-1)$.
As a consequence, no pencil with minimal ranks $(2k,2k-1)$ has a best approximation $\mathcal{B}_{2k-1,2k-1}$ in the norm topology, as 
stated next.

\begin{corol}
\label{corol:approx-43} 
 If $A + \lambda B \in \mathcal{P}_{2k,2k}(\mathbb{R})$ satisfies $\rho(A,B) = (2k,2k-1)$, then  
 \begin{equation*}
  \inf_{A' + \lambda B' \in \mathcal{B}_{2k-1,2k-1}} \| A + \lambda B - (A' + \lambda B') \| = 0
 \end{equation*}
is not attained by any $A' + \lambda B' \in \mathcal{B}_{2k-1,2k-1}$.
\end{corol}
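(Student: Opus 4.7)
The plan is to derive this corollary as an essentially immediate consequence of \Cref{theo:closure} combined with \Cref{prop:r-s}. There is no deep obstacle here; the main point is to cleanly interpret what the equality $\overline{\mathcal{B}_{2k-1,2k-1}} = \mathcal{B}_{2k,2k-1}$ says about infima of distances.

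First, I would verify that $A + \lambda B$ itself lies in $\mathcal{B}_{2k,2k-1}$. Since $\rho(A,B) = (2k, 2k-1)$, there exists by \Cref{prop:rho-orbit} some $A' + \lambda B' \in \mathcal{O}(A,B)$ with $\rank A' = 2k$ and $\rank B' = 2k-1$, so $A + \lambda B \in \mathcal{B}_{2k,2k-1}$ by definition \eqref{Bset}. Invoking \Cref{theo:closure}, this means $A + \lambda B$ lies in the norm closure of $\mathcal{B}_{2k-1,2k-1}$, so there is a sequence $\{A_p + \lambda B_p\}_{p \in \mathbb{N}} \subset \mathcal{B}_{2k-1,2k-1}$ with $A_p + \lambda B_p \to A + \lambda B$. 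This immediately yields
\begin{equation*}
  \inf_{A' + \lambda B' \in \mathcal{B}_{2k-1,2k-1}} \| A + \lambda B - (A' + \lambda B') \| = 0.
\end{equation*}

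Second, I would show that this infimum is not attained, which amounts to checking that $A + \lambda B \notin \mathcal{B}_{2k-1,2k-1}$. This follows directly from \Cref{prop:r-s}: since $\rho(A,B) = (2k, 2k-1)$, membership of $A + \lambda B$ in $\mathcal{B}_{r',s'}$ (with $r' \ge s'$) requires both $r' \ge 2k$ and $s' \ge 2k-1$; the choice $(r',s') = (2k-1, 2k-1)$ violates $r' \ge 2k$. Hence no element of $\mathcal{B}_{2k-1, 2k-1}$ can equal $A+\lambda B$, so the zero infimum cannot be attained.

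Combining these two observations completes the proof. The only care needed is to cite \Cref{prop:rho-orbit} (to move from $\rho(A,B)$ to an actual representative attaining the minimal ranks in $\mathcal{O}(A,B)$) and \Cref{prop:r-s} (to rule out membership in $\mathcal{B}_{2k-1,2k-1}$); the heavy lifting has already been done in \Cref{theo:closure}, whose construction of explicit approximating sequences $Z_p(\lambda)$ and $Z'_p(\lambda)$ is what secures the density statement on which this corollary rests.
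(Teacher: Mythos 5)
Your proof is correct and follows essentially the same route as the paper, which presents \Cref{corol:approx-43} as an immediate consequence of \Cref{theo:closure} (density of $\mathcal{B}_{2k-1,2k-1}$ in $\mathcal{B}_{2k,2k-1}$) together with the observation that $A+\lambda B \notin \mathcal{B}_{2k-1,2k-1}$; your citations of \Cref{prop:rho-orbit} and \Cref{prop:r-s} simply make explicit the two small membership checks that the paper leaves implicit.
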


The next result establishes that the best approximation of any pencil $A + \lambda B \in \mathcal{C}$ on $\mathcal{B}_{2k,2k-1}$ 
must have minimal ranks $(2k,2k-1)$, otherwise it is not optimal. It is thus in the same spirit of Lemma 8.2 of De Silva and Lim 
\cite{deSilva2008}, which states that for positive integers $r,s$ such that $r \ge s$, the best approximation of a rank-$r$ tensor 
having rank up to $s$ always has rank $s$.  

\begin{lem}
\label{lem:approx-43}
 Let $A + \lambda B \in \mathcal{C}$. Then, in the norm topology we have 
 \begin{equation*}
  \arg\min_{A' + \lambda B' \in \mathcal{B}_{2k,2k-1}} \| A + \lambda B - (A' + \lambda B') \|
  \quad \subset \quad \mathcal{B}_{2k,2k-1} \setminus  \left( \mathcal{B}_{2k,2k-2} \cup \mathcal{B}_{2k-1,2k-1} \right).
 \end{equation*} 
 In other words, every best approximation $A' + \lambda B'$ of $A + \lambda B$ on $\mathcal{B}_{2k,2k-1}$ is such that 
$\rho(A',B') = (2k,2k-1)$. 
\end{lem}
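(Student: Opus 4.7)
The plan is to proceed by contradiction. By \Cref{cor:Y-bestapp}, the infimum is attained by some best approximation $P^* = A' + \lambda B'$. First observe that $P^* \ne A + \lambda B$, since $A + \lambda B \in \mathcal{C} = \mathcal{P}_{2k,2k}(\mathbb{R}) \setminus \mathcal{B}_{2k,2k-1}$. I would then argue that $P^*$ must lie on the topological boundary $\partial \mathcal{B}_{2k,2k-1}$: otherwise an open ball around $P^*$ sits inside $\mathcal{B}_{2k,2k-1}$, and for small $\epsilon>0$ the pencil $P^* + \epsilon \big((A + \lambda B) - P^*\big)$ would belong to $\mathcal{B}_{2k,2k-1}$ yet be strictly closer to $A + \lambda B$. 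Now suppose for contradiction that $\rho(P^*) \ne (2k, 2k-1)$, so $P^* \in \mathcal{B}_{2k,2k-2} \cup \mathcal{B}_{2k-1,2k-1}$.

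The core of the argument is to construct a perturbation $v \in \mathcal{P}_{2k,2k}(\mathbb{R})$ such that (a) $P^* + \epsilon v \in \mathcal{B}_{2k,2k-1}$ for all sufficiently small $\epsilon > 0$, and (b) $\langle (A + \lambda B) - P^*, v \rangle > 0$. Once $v$ is in hand,
$$\|A + \lambda B - (P^* + \epsilon v)\|^2 = \|A + \lambda B - P^*\|^2 - 2\epsilon\,\langle (A + \lambda B) - P^*, v \rangle + \epsilon^2 \|v\|^2 < \|A + \lambda B - P^*\|^2$$
for small $\epsilon > 0$, contradicting the optimality of $P^*$.

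To construct $v$ I would exploit the Kronecker canonical form of $P^*$ together with \Cref{lem:rho-regular}. In the case $P^* \in \mathcal{B}_{2k-1,2k-1}$, the canonical form of $P^*$ exhibits two linearly independent real linear-form factors in $\det(tA' + uB')$, giving (at least) two distinct real rank-drops. Mimicking the splitting construction in the proof of \Cref{theo:closure}, I would perturb one of the two corresponding Jordan blocks so as to destroy that real eigenvalue while leaving the other untouched; this guarantees property (a) because the surviving real root keeps the perturbed pencil in $\mathcal{B}_{2k,2k-1}$. In the case $P^* \in \mathcal{B}_{2k,2k-2}$ there are two or more Jordan blocks sharing a single real eigenvalue, and an analogous perturbation splits the eigenvalue into distinct real ones, at least one of which is simple and thus persists under further perturbation, again securing (a).

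The main obstacle is verifying property (b) in full generality. The set $K$ of perturbations satisfying (a) is a cone of positive dimension, because in the above construction the split can be parametrized by an open set of modifications to the canonical form (further composed with the $\GL_{2k,2k,2}(\mathbb{R})$-action, which acts transitively on the orbit of $P^*$ with nontrivial tangent space). Since $(A + \lambda B) - P^* \neq 0$, the open half-space $\{v : \langle (A + \lambda B) - P^*, v \rangle > 0\}$ is nonempty, and it remains to rule out that $K$ lies entirely in its closed complement. I would eliminate this degenerate possibility by exploiting the multiple independent ways of producing admissible perturbations: in Case I, the two distinct rank-drop directions yield two linearly independent families of admissible $v$; in Case II, different choices for how to split the Jordan structure play the same role. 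If every resulting $v$ satisfied $\langle (A + \lambda B) - P^*, v \rangle \leq 0$, the collection of linear constraints obtained by varying the construction would force $(A + \lambda B) - P^* = 0$, contradicting $A + \lambda B \ne P^*$. Pinning down this last step cleanly is the delicate part of the argument.
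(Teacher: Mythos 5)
Your overall strategy is right -- assuming a best approximation $P^*$ exists (by \Cref{cor:Y-bestapp}), take $P^* \in \mathcal{B}_{2k,2k-2} \cup \mathcal{B}_{2k-1,2k-1}$ and build a perturbation $v$ with (a) $P^* + \epsilon v \in \mathcal{B}_{2k,2k-1}$ for small $\epsilon$ and (b) $\langle (A + \lambda B) - P^*, v \rangle > 0$, yielding a contradiction. But your construction of $v$ via Jordan-block splitting in the Kronecker canonical form has a genuine gap, and you flag it yourself: you never establish (b). The difficulty is structural. The cone of perturbations you produce is tied to the canonical form and the orbit geometry, and there is no argument showing that this cone is not contained in the closed half-space $\{v : \langle (A + \lambda B) - P^*, v \rangle \le 0\}$. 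Your claim that ``the collection of linear constraints obtained by varying the construction would force $(A + \lambda B) - P^* = 0$'' is an assertion, not a proof; it would require showing the admissible perturbations span a space rich enough to separate against an arbitrary nonzero residual, which is precisely the hard point.

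The paper's proof avoids this difficulty by using a far simpler family of admissible perturbations. Using \Cref{lem:GL2}, write $P^* = A' + \lambda B'$ as $(t_{11} U + t_{12} V) + \lambda (t_{21} U + t_{22} V)$ with $T = (t_{ij}) \in \GL_2(\mathbb{R})$ and either $(\rank U, \rank V) \le (2k-1, 2k-1)$ or $(\rank U, \rank V) \le (2k, 2k-2)$. Then for any rank-one $W$ and either $i=1$ or $i=2$, the pencil $P^* + c\,(t_{1i} W + \lambda t_{2i} W)$ corresponds to adding $cW$ to $U$ or to $V$ in that parameterization, which can raise only one of the two ranks by at most one -- so (a) holds automatically in all cases. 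Property (b) then reduces to finding $W$ and $i$ with $\langle t_{1i}(A - A') + t_{2i}(B - B'),\, W\rangle \ne 0$. Since $T$ is invertible, $t_{1i}(A - A') + t_{2i}(B - B')$ cannot vanish for both $i$ (otherwise $A = A'$, $B = B'$), and rank-one matrices span $\mathbb{R}^{2k \times 2k}$, so some $W$ works. Taking $c$ proportional to that inner product completes the argument. In short: the paper solves exactly the step you leave open, by choosing a perturbation family whose spanning property is trivial to check, instead of one dictated by the Kronecker canonical form.
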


\begin{proof}
Take any $A' + \lambda B' \in \mathcal{B}_{2k,2k-2} \cup \mathcal{B}_{2k-1,2k-1}$. By definition, it can be written as
$(t_{11} U + t_{12} V) + \lambda (t_{21} U + t_{22} V)$, with $T = (t_{ij}) \in \GL_2(\mathbb{R})$ and either $(\rank U, \rank V) \le 
(2k-1,2k-1)$ or $(\rank U, \rank V) \le (2k,2k-2)$, where the inequality is meant entry-wise. Let us define now $Z_1(\lambda) 
\triangleq 
t_{11}(A- A') + \lambda t_{21}(B- B')$ and $Z_2(\lambda) \triangleq t_{12}(A- A') + \lambda t_{22}(B- B')$. Since $A' + \lambda B' 
\not\equiv A + \lambda B$, then $Z_i(\lambda) \not\equiv 0$ for at least one $i \in \{1,2\}$. Thus, there exists a rank-one matrix $W$ 
such that $\langle Z_i(\lambda), W + \lambda W \rangle \neq 0$ for a certain $i \in \{1,2\}$. Without loss of generality, we can 
assume that $\|t_{1i} W + \lambda t_{2i} W \|^2 = (t_{1i}^2 + t_{2i}^2) \|W\|^2 = 1$. For any $c \in \mathbb{R}$, we have
\begin{multline*}
 \| A + \lambda B - (A' + \lambda B' + c \, (t_{1i} W + \lambda t_{2i} W) ) \|^2 \\
 = \| A + \lambda B - (A' + \lambda B') \|^2 - 2c \langle Z_i(\lambda) , W + \lambda W \rangle + c^2.
\end{multline*} 
Now, if we choose $c = \langle Z_i(\lambda) , W + \lambda W \rangle  \neq 0$, then clearly 
\begin{equation*}
 \| A + \lambda B - (A' + \lambda B' + c \, (t_{1i} W + \lambda t_{2i} W) ) \|^2
  = \| A + \lambda B - (A' + \lambda B') \|^2 - c^2  < \| A + \lambda B - (A' + \lambda B') \|^2.
\end{equation*}
This means that $A' + \lambda B' + c \, (t_{1i} W + \lambda t_{2i} W)$ is closer to $A + \lambda B$ than $A' + \lambda B'$. Using now 
the expressions given for $A'$ and $B'$, we find that
\begin{equation*}
A'' + \lambda B'' \triangleq
A' + \lambda B' + c \, (t_{1i} W + \lambda t_{2i} W) =
    (t_{11} U + t_{12} V + c t_{1i} W) + \lambda (t_{21} U + t_{22} V + c t_{2i} W).
\end{equation*}
If $i=1$, we have $ (E,E,T^{-1}) \cdot (A'' + \lambda B'') = (U + c W) + \lambda V $, otherwise 
$ (E,E,T^{-1}) \cdot (A'' + \lambda B'') = U + \lambda (V + c  W) $. Either way, since $\rank W = 1$, then $A'' + \lambda B'' 
\in \mathcal{B}_{2k,2k-1}$. This shows that for any $A' + \lambda B' \in \mathcal{B}_{2k-1,2k-1} \cup \mathcal{B}_{2k,2k-2}$, we can 
always find some other pencil $A'' + \lambda B''$ in $\mathcal{B}_{2k,2k-1}$ such that $\| A + \lambda B - (A'' + \lambda B'') 
\| < \| A + \lambda B - (A' + \lambda B') \|$. Because a best approximation of $A + \lambda B \in \mathcal{B}_{2k,2k-1}$ must exist 
due to \Cref{cor:Y-bestapp}, we conclude that it can only belong to  $\mathcal{B}_{2k,2k-1} \setminus \left( \mathcal{B}_{2k-1,2k-1} 
\cup \mathcal{B}_{2k,2k-2}\right)$, i.e., it necessarily has minimal ranks $(2k,2k-1)$.
\end{proof}

We are now in a position to prove the main result of this section.

\begin{theo}
In the norm topology, if $A + \lambda B \in \mathcal{C}$ then 
 \begin{equation*}
  \inf_{A' + \lambda B' \in \mathcal{B}_{2k-1,2k-1}} \| A + \lambda B - (A' + \lambda B') \| 
 \end{equation*}
is not attained by any $A' + \lambda B' \in \mathcal{B}_{2k-1,2k-1}$. In other words, the problem above stated has no solution 
when $\rho(A,B)=(2k,2k)$. 
\end{theo}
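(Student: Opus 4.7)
The plan is to combine the three preceding auxiliary results---the closure identity $\overline{\mathcal{B}_{2k-1,2k-1}} = \mathcal{B}_{2k,2k-1}$ from \Cref{theo:closure}, the existence of a best approximation on $\mathcal{B}_{2k,2k-1}$ from \Cref{cor:Y-bestapp}, and the structural result \Cref{lem:approx-43}---into a short contradiction argument. The key conceptual observation is that, because $\mathcal{B}_{2k-1,2k-1}$ sits densely in the larger closed set $\mathcal{B}_{2k,2k-1}$, the approximation infimum of $A+\lambda B$ over either set is the same number, and this shared optimal value is realized only by pencils strictly outside $\mathcal{B}_{2k-1,2k-1}$.

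Concretely, I would first argue that
\begin{equation*}
 d^{\star} \triangleq \inf_{A'+\lambda B' \in \mathcal{B}_{2k-1,2k-1}} \|A+\lambda B - (A'+\lambda B')\|
 \ = \ \inf_{A'+\lambda B' \in \mathcal{B}_{2k,2k-1}} \|A+\lambda B - (A'+\lambda B')\|.
\end{equation*}
The inequality $\ge$ is immediate from $\mathcal{B}_{2k-1,2k-1} \subset \mathcal{B}_{2k,2k-1}$, while the reverse inequality follows from continuity of the norm combined with $\mathcal{B}_{2k,2k-1} = \overline{\mathcal{B}_{2k-1,2k-1}}$ (\Cref{theo:closure}): any element of $\mathcal{B}_{2k,2k-1}$ is a limit of elements of $\mathcal{B}_{2k-1,2k-1}$, so the two infima coincide.

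Next, by \Cref{cor:Y-bestapp} the right-hand infimum is attained by some $A^{\star}+\lambda B^{\star} \in \mathcal{B}_{2k,2k-1}$. Since $A+\lambda B \in \mathcal{C}$ by hypothesis, \Cref{lem:approx-43} forces $\rho(A^{\star},B^{\star}) = (2k,2k-1)$; in particular, $A^{\star}+\lambda B^{\star} \notin \mathcal{B}_{2k-1,2k-1}$. Now suppose, for contradiction, that the infimum $d^{\star}$ on the left is attained by some $\tilde A + \lambda \tilde B \in \mathcal{B}_{2k-1,2k-1}$. Because $\mathcal{B}_{2k-1,2k-1} \subset \mathcal{B}_{2k,2k-1}$ and the two infima agree, $\tilde A + \lambda \tilde B$ would also minimize the distance over $\mathcal{B}_{2k,2k-1}$. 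But then \Cref{lem:approx-43} applies to $\tilde A + \lambda \tilde B$ as well, yielding $\tilde A + \lambda \tilde B \in \mathcal{B}_{2k,2k-1} \setminus (\mathcal{B}_{2k,2k-2} \cup \mathcal{B}_{2k-1,2k-1})$, contradicting the assumption that $\tilde A + \lambda \tilde B \in \mathcal{B}_{2k-1,2k-1}$. Therefore no minimizer exists in $\mathcal{B}_{2k-1,2k-1}$.

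The proof is essentially a bookkeeping exercise assembling prior results, so there is no real technical obstacle here; the entire substance has been pushed into \Cref{lem:C-open}, \Cref{theo:closure}, and \Cref{lem:approx-43}. The only point that requires a sentence of care is the equality of the two infima---one must invoke density together with continuity of $\|A+\lambda B - \cdot\|$ to ensure that approximating a minimizing sequence in $\mathcal{B}_{2k,2k-1}$ by nearby elements of $\mathcal{B}_{2k-1,2k-1}$ preserves the optimal value.
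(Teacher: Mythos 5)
Your proof is correct and uses essentially the same approach as the paper: it combines \Cref{cor:Y-bestapp}, \Cref{lem:approx-43}, and \Cref{theo:closure} in the same way, with only a cosmetic reorganization (you make the equality of the two infima an explicit intermediate step and close with a short contradiction, whereas the paper argues directly that the optimal value over $\mathcal{B}_{2k-1,2k-1}$ equals the distance to the best approximant in $\mathcal{B}_{2k,2k-1}$, which lies outside $\mathcal{B}_{2k-1,2k-1}$).
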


\begin{proof}
 \Cref{cor:Y-bestapp} and \Cref{lem:approx-43} imply that for every pencil $A + \lambda B \in \mathcal{C}$ there exists another 
pencil $A' + \lambda B'$ such that $\rho(A',B') = (2k,2k-1)$ and
\begin{equation}
\label{B43-B33}
 \left\| A + \lambda B - (A' + \lambda B') \right\| < \left\| A + \lambda B - (C + \lambda D) \right\|, 
 \quad \forall \, C + \lambda D \in \mathcal{B}_{2k-1,2k-1} \subset \mathcal{B}_{2k,2k-1}. 
\end{equation}
But, it follows from \Cref{theo:closure} that any such $A' + \lambda B'$ can be arbitrarily well approximated by pencils from 
$\mathcal{B}_{2k-1,2k-1}$, that is, 
\begin{equation}
   \inf_{C + \lambda D \in \mathcal{B}_{2k-1,2k-1}} \| A' + \lambda B' - (C + \lambda D) \| = 0, 
\end{equation}
whilst no $C + \lambda D \in \mathcal{B}_{2k-1,2k-1}$ can attain that infimum because $A' + \lambda B' \notin \mathcal{B}_{2k-1,2k-1}$.
Combining the above facts, we conclude that
\begin{equation}
   \inf_{C + \lambda D \in \mathcal{B}_{2k-1,2k-1}} \| A + \lambda B - (C + \lambda D) \| = \| A + \lambda B - (A' + \lambda B') \|
\end{equation} 
cannot be attained by any $C + \lambda D \in \mathcal{B}_{2k-1,2k-1}$.
\end{proof}

 \section{Conclusion}
\label{sec:concl}

This work defines and studies a fundamental property of a matrix pencil, which we have called its minimal ranks. 
The structure of a space of pencils can be better understood on the basis of this notion and its properties.
In particular, endowing $\mathcal{P}_{m,n}( \mathbb{F} )$ with a norm, we have studied the problem of approximation of a 
pencil by another one having strictly lower minimal ranks in the induced norm topology. An optimal approximation may not exist, and 
our results show that this is true for every pencil of the set $\mathcal{C} \subset \mathcal{P}_{2k,2k}( \mathbb{R} )$ if an 
approximation is sought over $\mathcal{B}_{2k-1,2k-1}$ for any positive integer $k$. $\mathcal{C}$ is open, which shows that, 
contrarily to the complex-valued case, this phenomenon can happen for pencils forming a positive volume set. Translated to a tensor 
viewpoint, our result states that certain $2k \times 2k \times 2$ real tensors forming a positive-volume set have no best approximate 
block-term decomposition with two rank-$(2k-1)$ blocks. 

As we have shown, the definition and essential properties of the minimal ranks can be readily extended to matrix polynomials, which 
are associated with more general $m \times n \times d$ tensors. We believe this should provide a useful element for the study of 
third-order block-term decompositions composed by matrix-vector tensor products. In particular, results enabling the computation 
of the minimal ranks of a general matrix polynomial would certainly be helpful in this regard.

\appendix

\section{Equivalent pencils of Kronecker canonical forms}
\label{sec:equiv}

In \Cref{tab:equiv}, we give $\GL_{m,n,2}(\mathbb{R})$-equivalent pencils for each canonical form listed on Tables 
\ref{tab:1}--\ref{tab:3}. It can be seen that the only pencil with a free parameter is that in the family $\mathscrc{R}'_{3,2}$, 
which thus comprises infinitely many orbits. For all other listed families, a single orbit exists.

\begin{table}
\centering
   \begin{tabular}{c|c}
   \hline
   \multicolumn{2}{c}{Table \ref{tab:1}} \\
   \hline
     Family & Equivalent pencil \\
   \hline
$\mathscrc{R}_{1,0}$ & $E_1$ \\
\hline
$\mathscrc{S}_{1,1}$ & $L_1(\lambda)$ \\       
 \hline 
 \multicolumn{2}{c}{\ }  \\[10pt]
   \hline
   \multicolumn{2}{c}{Table \ref{tab:2}} \\
   \hline
     Family & Equivalent pencil \\   
   \hline   
$\mathscrc{R}_{1,1}$& $(E_1 + \lambda \,  0) \oplus (0 + \lambda \,  E_1)$ \\ 
$\mathscrc{R}_{2,0}$& $E_2$ \\
\hline
$\mathscrc{R}_{2,1}$& $J_2(0) + \lambda \,  E_2$ \\
\hline
$\mathscrc{R}_{2,2}$& $Q_2(0,1) + \lambda \,  E_2$ \\
\hline
$\mathscrc{S}_{2,2}$& $L_2(\lambda)$ \\
$\mathscrc{S}_{2,1}$& $L_1(\lambda) \oplus E_1$ \\
\hline
$\mathscrc{S}'_{2,2}$& $L_1(\lambda) \oplus L_1(\lambda)$  \\
\hline
   \end{tabular}
\hspace{0.5cm}
   \begin{tabular}{c|c}
   \hline
   \multicolumn{2}{c}{Table \ref{tab:3}} \\
   \hline
     Family & Equivalent pencil \\   
   \hline   
$\mathscrc{S}''_{2,2}$& $L_1(\lambda) \oplus R_1(\lambda)$ \\ 
\hline
$\mathscrc{R}'_{2,2}$& $(E_2 + \lambda \,  (E_1 \oplus 0)) \oplus (0 + \lambda \,  E_1)$ \\ 
$\mathscrc{R}'_{2,1}$& $(E_1 + \lambda \, 0) \oplus (0 + \lambda \,  E_2)$ \\ 
$\mathscrc{R}_{3,0}$& $E_3$ \\ 
\hline
$\mathscrc{R}_{3,2}$& $J_3(0) + \lambda \,  E_3$ \\ 
$\mathscrc{R}_{3,1}$& $(0 \oplus J_2(0)) + \lambda \,  E_3$ \\ 
$\mathscrc{R}''_{2,2}$& $ (0 + \lambda \,  E_1) \oplus (J_2(1) + \lambda \,  J_2(0))$ \\ 
\hline
$\mathscrc{R}'_{3,2}$& $(a' \oplus Q_2(0,1)) + \lambda \,  E_3$ \\ 
\hline
$\mathscrc{S}'''_{2,2}$& $L_1(\lambda) \oplus (E_1 + \lambda \, 0) \oplus (0 + \lambda \,  E_1)$ \\ 
$\mathscrc{S}_{3,1}$& $L_1(\lambda) \oplus (E_2 + \lambda \, 0)$ \\ 
$\mathscrc{S}_{3,2}$& $L_1(\lambda) \oplus (J_2(0) + \lambda \,  E_2)$ \\ 
$\mathscrc{S}_{3,3}$& $L_1(\lambda) \oplus (Q_2(0,1) + \lambda \,  E_2)$ \\
$\mathscrc{S}'_{3,2}$& $L_2(\lambda) \oplus (E_1 + \lambda \, 0)$ \\
$\mathscrc{S}'_{3,3}$& $L_3(\lambda)$ \\
\hline
   \end{tabular}
 \caption{Equivalent pencils to each canonical form of Tables \ref{tab:1}--\ref{tab:3}. Zero blocks are explicitly written for 
clarity.}
 \label{tab:equiv}   
\end{table}



\section*{Acknowledgements}

The authors are grateful for several suggestions given by Y.~Qi, which allowed in particular to significantly improve 
\Cref{sec:positive-volume}.
J.~H.~de M.~Goulart would like to thank also S.~Icart, J.~Lebrun and R.~Cabral Farias for helpful discussions on the subject.

\bibliographystyle{abbrv}
\bibliography{goulart-comon-pencils}

\end{document}